\documentclass{proc-l}
\usepackage{mathtools,amsfonts,amssymb,amsthm,mathrsfs}
\usepackage{lmodern}
\usepackage{graphicx}
\usepackage{subcaption}
\usepackage{tikz}
\usepackage{pstricks}
\usepackage{tabularx}
\usepackage{enumitem}
\usepackage[english]{babel}
\usepackage[T1]{fontenc}
\usepackage[utf8]{inputenc}
\usepackage{comment}
\usepackage[normalem]{ulem}
\usepackage{dsfont} 

\usepackage{chngcntr} 

\counterwithin*{equation}{section}

\theoremstyle{plain}

\newtheorem{theorem}{Theorem}[section]
\newtheorem{lemma}[theorem]{Lemma}
\newtheorem{proposition}[theorem]{Proposition}

\theoremstyle{definition}

\newtheorem{definition}{Definition}[section]
\newtheorem{remark}[theorem]{Remark}
\newtheorem{assumption}{Assumption}[section]

\usepackage{hyperref}
\hypersetup{colorlinks,%
	citecolor=red,%
	filecolor=yellow,%
	linkcolor=black,%
	urlcolor=black}

\newcommand{\unmu}{\underline{\mu}}

\newcommand \eps {\varepsilon}

\newcommand \R {\mathbb R}

\renewcommand{\L}{\mathrm{L}} 

\newcommand{\BC}{\mathrm{BC}}
\newcommand{\QQ}{\mathcal{Q}}
\newcommand{\CC}{\mathcal{C}}

\newcommand \FF {\mathcal{F}}

\newcommand \LL {\mathcal{L}}  

\newcommand{\dd}{\mathrm{d}}

\renewcommand{\Re}{\mathrm{Re}}

\begin{document}
	
	\title{Bistable wavefronts in the Gurtin-MacCamy population model}
    
	\author{Quentin Griette}
	
	\address{
		Université Le Havre Normandie, Normandie Univ.,
		LMAH UR 3821, F-76600 Le Havre, France. 
	}
	\curraddr{}
	\email{quentin.griette@univ-lehavre.fr}
	\thanks{}
	
	\author{Franco Herrera}
	\address{
		Instituto de Matemáticas, Universidad de Talca, Talca, Chile; 
		Université Le Havre Normandie, Normandie Univ.,
		LMAH UR 3821, F-76600 Le Havre, France. 
	}
	\curraddr{}
	\email{franco.herrera@utalca.cl}
	\thanks{}
	\author{Hao Kang}
	
	\address{
		Center for Applied Mathematics, Tianjin University, Tianjin 300072, China; 
	}
	\curraddr{}
	\email{haokang@tju.edu.cn}
	\thanks{}

	\commby{}
	\subjclass[2020]{Primary 35C07;  Secondary 35B40, 92D25, 35Q92.}
	\keywords{Age-structured model, bistable traveling waves, asymptotic behavior, population dynamics.}
	
	\begin{abstract}
		In this paper we prove the existence of monotone traveling wave solutions for an age-structured population model with a monotone and bistable nonlinearity. We formulate the problem in terms of a discrete-time recurrence $u_{n+1} = Q[u_n]$, with operator $Q$ encoding the role of the speed into its definition. For each speed, we prove the existence of a traveling wave by the general theory of monotone evolution systems, and from appropriate estimates on the speed, we deduce the existence of a traveling wave of the original problem. Also, the sign of the speed is established under an invasion condition, and the asymptotic behavior of the profile is determined by some Tauberian results.
	\end{abstract}
	\maketitle
	
	\section{Introduction}\label{sec:intro}
	
	Since the pioneering works of Fisher and of Kolmogorov, Petrovski and Piskunov on the spatial spread of an advantageous gene, {the study of propagation phenomena has attracted a lot of attention, and} traveling wave solutions have become a fundamental tool for describing biological invasions. A traveling front represents an interface separating regions in different population states, while its speed quantifies the rate and direction at which one state invades the other. Alternatively, in epidemiology, traveling waves can describe the advance of an epidemic from an initially localized infected region; see, e.g., the monograph of Murray \cite{MR1952568}, the survey of Ruan \cite{MR2309365}, and the works \cite{DenuNgomaSalako2020,DenuNgomaSalako2023} for related reaction-diffusion epidemic models.
	
	At the same time, a realistic description of population dynamics often requires keeping track of individual heterogeneity. Age, size, developmental stage, or infection age may strongly affect mortality, reproduction, transmission, and even spatial movement. Structured population models incorporate such individual-level information into population-level dynamics and therefore provide a natural framework in which demographic or epidemiological processes can be coupled with spatial dispersal. The interaction between structure and space has motivated a very rich literature; see, among others, \cite{MR2737849,MR2842101,MR2564476,MR1852431} and, more recently, \cite{MR4929042,MR4984351}.
	
	Most of these propagation results concern to the so-called monostable case, in which the extinction state is unstable and a positive state invades whenever the population is successfully introduced. On the other hand, in bistable dynamics, both extinction and a positive equilibrium are locally stable and are separated by an unstable state. Such a structure naturally arises in the presence of strong Allee effects: a population introduced below the threshold becomes extinct, whereas sufficiently large ones may establish and invade. Consequently, the existence and direction of a bistable front encode the competition between two locally stable population states. {Even without spatial diffusion, such threshold dynamics arise in age-structured models, e.g.,} \cite{Griette2026}. Our purpose is to understand how bistability interacts with age structure and spatial diffusion.
	
	More precisely, we consider the age-structured population model introduced by Gurtin and MacCamy in \cite{MR354068},
	\begin{equation}\label{GM-model}
		\begin{dcases}
			(\partial_t + \partial_a - \partial_{xx}) u(t,a,x) = -\mu(a) u(t,a,x), & t\in \R,~a>0,~x\in\R \\
			u(t,0,x) = f\left( \int_0^{+\infty} \beta(a) u(t,a,x) \dd a \right) & t\in\R,~x\in \R.
		\end{dcases}
	\end{equation}
	
	Here $u(t,a,x)$ denotes the density of individuals of age $a$ at time $t$ and position $x$, while $\mu$ and $\beta$ are the age-dependent mortality and fertility rates, respectively. The nonlinear birth function $f$ accounts for density-dependent reproductive effects and, in the present work, is assumed to possess a bistable structure.
	
	Age-structured models of this type have received increasing attention in the recent years. For the Gurtin-MacCamy equation, Ducrot and Magal \cite{MR3988617} established the existence of wave trains in multidimensional space employing a Hopf bifurcation argument for the associated non-densely defined Cauchy problem. Recently, Ducrot and Kang \cite{MR4929042} proved the existence of traveling waves and spreading speeds for an age-structured epidemic model with nonlocal diffusion. Similarly, Kang and Liu \cite{MR4984351} investigated the global dynamics and spreading properties for a diffusive age-structured population model in a spatially periodic environment. As before, these works primarily deal with monostable propagation mechanisms. To the best of our knowledge, the existence of traveling fronts for the Gurtin-MacCamy model in the bistable regime remains open.
	
	{We also focus this study on the monotone scenario}. A major difficulty comes from the order structure of the equation. Although the semiflow generated by the model is monotone, it does not enjoy the stronger order properties that are often available for scalar parabolic equations, such as strong monotonicity or eventual strong order preservation; see \cite{Griette2026}. We therefore approach the traveling-wave problem indirectly. For each $c\in\R$, the characteristic representation of \eqref{GM-model} allows us to reformulate the profile equation as a nonlinear convolution equation where the convolution kernel $G_c$ incorporates both the age structure and the prospective wave speed $c$. Thus, traveling fronts of the original model with speed $c$ are precisely the nonconstant fixed points of $Q_c$.
	
	Thus, rather than searching immediately for a fixed point of $Q_c$, we can regard it as the time-one map of a discrete monotone dynamical system. The abstract theory of Fang and Zhao \cite{MR3420507} then yields a bistable traveling wave for the recursion generated by $Q_c$, with some speed $v(c)$. Then, from the squeezing argument given by Chen in \cite{MR1424765}, we show that speed is uniquely determined and depends continuously on $c$. Some estimates on $v(c)$ follow from the spreading properties of the two associated monostable subsystems, in the spirit of the comparison theory developed in \cite{LUI1989269,MR2270161}, and showing that its sign changes as $c$ varies. We hence deduce the existence of a standing wave for the corresponding map $Q_{c^*}$, as we desired. Finally, adapting the convolution techniques of Diekmann and Kaper \cite{MR512163}, we obtain a precise description of the exponential asymptotic behavior of the wave near the two stable equilibria.
	
	\subsection{Assumptions and main results}
	
	\begin{definition}
		A \emph{traveling wave solution} of \eqref{GM-model} is a pair $(c,U)\in \R \times C(\R,\L^1(0,+\infty))$ such that $u(t,a,x) = U(a,x-ct)$ is an entire solution of \eqref{GM-model}. $c$ is called the \emph{speed} of the wave and $U$ the \emph{wave profile}. Additionally, we say that $U$ connects the stable equilibria in the system if
		\begin{equation}\label{eq:lim}
			U(a,-\infty) = \Pi(a):=e^{-\int_0^a\mu(s)ds} \quad \text{and}\quad U(a,+\infty) = 0.
		\end{equation}
		In particular, if $U$ is a monotonic wave profile in space that connects the stable equilibria, we simply call $U$ a \text{bistable wavefront}.
	\end{definition}
	
	The notation $\BC(\R)$ stands for the space of real-valued bounded and continuous functions on $\R$, and it is used $\|\cdot\|$ to denote the $\sup$-norm in this space. This space is also an ordered Banach space with the order defined by the cone of bounded nonnegative and continuous functions $\BC_+(\R)$. For $u,v\in \BC(\R)$ we write $u\leq v$ provided that $v-u\in \BC_+(\R)$, $u<v$ provided $u\leq v$ but $u\not\equiv v$. The relations $u\geq(>) v$ are analogously defined. Clearly, every number $\alpha\in \R$ can be regarded as an element of $\BC(\R)$. Also, the following notation will be used for $\alpha\in \R$
	\begin{equation}\label{C_alpha}
		\CC_\alpha = \{ u\in\BC(\R) \colon 0 \leq u \leq \alpha \}.
	\end{equation}
	
	This study is devoted to the bistable case for the nonlinearity $f$ in \eqref{GM-model}. Mainly, we will assume that the following statements hold throughout the paper.
	
	\begin{assumption}\label{as-1}
		The following conditions over the parameters in \eqref{GM-model} hold:
		\begin{enumerate}[label = (\roman*)]
			\item {$\beta$ and $\mu$ are nonnegative measurable functions for which there exist constants $\beta^*>0$ and $\mu^*>\underline{\mu}>0$ such that $0\leq \beta(a)<\beta^*$ and $0<\underline{\mu}\leq \mu(a) \leq \mu^*$} for almost every $a\geq 0$. Moreover, they are normalized as
			\begin{equation}\label{eq:norm}
				\int_0^{+\infty} \beta(a) e^{-\int_0^a \mu(l)\dd l}\dd a = 1.
			\end{equation}
			\item $f\colon [0,1] \to \R$ is a strictly increasing $C^1$-function with bounded derivative. It has a  bistable structure in the sense that the equation $f(x)=x$ has exactly three solutions: $0<\theta<1$; 
			{finally, $0<f'(0),f'(1)<1$ and $f'(\theta)>1$.}
		\end{enumerate}
	\end{assumption}
	
	Hereinafter we set $\Pi(a)$ for the survival probability of an individual to age $a$, i.e., $\Pi(a):= e^{-\int_0^a \mu(l) \dd l}$. We further establish the notation
	\begin{equation}\label{eq:laplace_H}
		H(\lambda) := \int_0^{+\infty} \beta(a)\Pi(a)e^{\lambda a} \dd a,
	\end{equation}
	where the integral on the right is finite for every $\lambda\in[0,\Lambda^*)$, with $\Lambda^*\in(0,+\infty]$ being the abscissa of convergence. For some estimates for this value, see Proposition \ref{prop:esti_lambda}. Also, $f$ may be extended to the whole real line by linear functions matching its slope at $0$ and $1$, and hence we assume $f\in C^1(\R)$ when it is needed.
	
	
	To study the asymptotic behavior of the traveling wave solutions, more regularity on the function $f$ is needed, as well as some growing conditions for the exponential moments of $\beta(a)\Pi(a)$. Mainly, we impose the additional hypotheses:
	
	\begin{assumption} \label{as-2}
		The following properties for $f$ and $H$ hold:
		\begin{enumerate}[label=(\roman*)]
			\item $f$ is of class $C^{1+\varepsilon}$  for some $\eps\in(0,1)$ at the points $x=0$ and $x=1$.
			\item $H(\lambda)$ has a pole at the abscissa $\Lambda^*$, i.e., $H(\lambda) \to +\infty$ as $\lambda\uparrow \Lambda^*$.
		\end{enumerate}
	\end{assumption}
	
	The first part of Assumption \ref{as-2} means that
	\begin{equation}\label{eq:holder}
		f(x) = f'(0)x + O(x^{1+\eps}) \; \text{as}~ x\downarrow 0, \; \text{and} \; 1-f(1-x) = f'(1)x + O(x^{1+\eps}) \; \text{as}~ x\downarrow 0.
	\end{equation}
	
	\begin{remark}
		Condition (ii) above guarantees that the function $H$ attains every positive real number in $(-\infty,\Lambda^*)$. This condition is easily verified in several biological settings. For example, if the maximal age of reproduction of the population is finite, i.e., when $\beta$ has compact support; as well as when the age-dependent fertility and death rates are eventually constant. Notwithstanding, some particular choices for these parameters may conduce to the failure of such a condition; for instance, if $\beta(a)= \frac{C}{(1+a)^2}$ and $\mu$ is age-independent, with $C$ an appropriate constant to fulfill the normalization hypothesis, then it holds that $H(\lambda)\leq 1$ for all $\lambda \leq \mu$ but the integral diverges for $\lambda > \mu$.
	\end{remark}
	
	Now we can state the main results of this study. The first one is concerned about the existence of bistable wavefronts for the Gurtin-MacCamy model.
	
	\begin{theorem}\label{teo:main_1}
		Let Assumption \ref{as-1} hold. Then, there exist a pair $(c,U)\in \R \times C(\R,\L^1(0,+\infty))$ such that $u(t,a,x)= U(a,x-ct)$ defines a bistable wavefront of \eqref{GM-model} with speed $c$. Moreover, any traveling wave solution connecting the stable equilibria of the model is strictly decreasing in the space variable.
	\end{theorem}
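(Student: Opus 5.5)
Along characteristics, $U(a,x)$ is determined by its boundary value $\phi(x)=U(0,x)$. Substituting $u=U(a,x-ct)$ gives $(\partial_a - c\partial_x - \partial_{xx})U = -\mu(a) U$. This is solved by $U(a,\cdot)=\Pi(a)\,K_{a,c}*\phi$, where $K_{a,c}$ is the Gaussian heat kernel at time $a$ shifted by $-ca$. The boundary condition becomes the nonlinear convolution equation
\begin{equation*}
\phi = Q_c[\phi] := f\bigl(G_c*\phi\bigr), \qquad G_c(y)=\int_0^{+\infty}\beta(a)\Pi(a)\,\frac{1}{\sqrt{4\pi a}}e^{-\frac{(y+ca)^2}{4a}}\dd a .
\end{equation*}
By \eqref{eq:norm}, $G_c$ is a probability density. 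The spatial constants $0,\theta,1$ are exactly the fixed points of $Q_c$ in $\CC_1$. So a bistable wavefront with speed $c$ is the same thing as a monotone fixed point $\phi$ of $Q_c$ with $\phi(-\infty)=1$ and $\phi(+\infty)=0$. The profile is then recovered through $U(a,\cdot)=\Pi(a)K_{a,c}*\phi$, and it lies in $C(\R,\L^1)$ because $\Pi$ decays like $e^{-\unmu a}$.

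For fixed $c$ I would verify the hypotheses of Fang and Zhao \cite{MR3420507} for the map $Q_c$ on $\CC_1$. It is translation invariant, and it is monotone because $f$ is increasing and $G_c\geq 0$. It is continuous for the compact-open topology and compact, since convolution against an $L^1$ kernel with finite first moment gives equicontinuity. Its restriction to constants is the map $f$, which is bistable with $0$ and $1$ stable and $\theta$ unstable. The remaining counter-propagation condition I would obtain from comparison with the two monostable subsystems on $[0,\theta]$ and $[\theta,1]$, which have finite spreading speeds. The theory then gives a monotone wave $\phi_c$ of the recursion $\phi_{n+1}=Q_c[\phi_n]$, that is, $Q_c[\phi_c](x)=\phi_c(x-v(c))$ for some speed $v(c)$.

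Next I would use Chen's squeezing argument \cite{MR1424765} to show that $v(c)$ is unique and depends continuously on $c$. The argument traps any solution between translates of $\phi_c$ with small exponentially decaying corrections, using the stability $f'(0),f'(1)<1$. Continuity in $c$ follows from uniqueness together with compactness, since $G_c$ depends continuously on $c$ in $L^1$. To get a sign change, I would show $v(c)\to\pm\infty$ as $c\to\mp\infty$. For $c$ very large, $G_c$ is concentrated near $y\approx -ca$ and shifts mass strongly to one side. Comparing with subsolutions (sub-threshold bumps above $\theta$) and supersolutions then forces the sign of $v$. The intermediate value theorem then yields $c^*$ with $v(c^*)=0$, which gives the fixed point $Q_{c^*}[\phi]=\phi$. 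I expect this step to be the main obstacle. It needs uniform estimates on $v(c)$ with explicit dependence on the drift, and I would first try explicit exponential sub- and supersolutions $\theta\pm\delta$ on growing intervals.

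For strict monotonicity, let $\phi$ be any fixed point connecting $1$ to $0$. I would run the sliding method on $\phi$: set $\phi^\tau=\phi(\cdot+\tau)$ and show $\phi^\tau\leq\phi$ for all $\tau\geq0$. This holds for $\tau$ large by the stability of the ends, which comes from the contraction $f'<1$ near $0$ and $1$. Then let $\tau$ decrease to the infimum $\tau_0$ at which the inequality holds. If $\tau_0>0$, the strict positivity of $G_c$ together with $f'>0$ gives the strict inequality $\phi^{\tau_0}<\phi$. The stable tails then allow $\tau$ to be pushed below $\tau_0$, a contradiction. So $\phi$ is nonincreasing, and strict decrease follows because $G_c>0$ everywhere and $f$ is strictly increasing: if $\phi(x)=\phi(x+\tau)$ for some $\tau>0$, then $G_c*(\phi-\phi^\tau)=0$, which forces $\phi=\phi^\tau$ everywhere, contradicting the limits. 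This monotonicity carries over to $U$ through the positive kernel $K_{a,c}$.
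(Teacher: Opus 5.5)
Your architecture matches the paper's. Your map $f(G_c*\phi)$ is conjugate to the paper's $G_c*f(\psi)$ via $\psi=G_c*\phi$. You then use Fang--Zhao for each $c$, Chen's squeezing for uniqueness and continuity of the speed, and the intermediate value theorem. Your sliding argument for strict monotonicity is correct and is a genuinely different route. The paper deduces monotonicity of an arbitrary wave from uniqueness modulo translations in Theorem~\ref{teo:exis_Qc}, whereas your argument only needs $G_c>0$, $f$ strictly increasing, and $f'<1$ near $0$ and $1$. The problem is that the two quantitative inputs the whole scheme rests on are not supplied.

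\textbf{Counter-propagation.} (A6) does not follow from the monostable subsystems having finite spreading speeds. It is the strict inequality \eqref{eq:counter-prop} coupling the two subsystems. The drift $c$ shifts $c_+^*(\theta,1;c)$ and $c_-^*(0,\theta;c)$ in opposite directions, so you need lower bounds in which the $c$-dependence cancels. The paper gets this from the exact moment formula $M_c(\lambda)=H(\lambda^2-c\lambda)$ and linear comparison with $p_0G_c*$, where $p_0<f'(\theta)$. Jensen's inequality for the probability measure $\beta\Pi\,\dd a$ then gives \eqref{mono_speeds_1}. The two bounds sum to $4\sqrt{\log f'(\theta)}\int_0^\infty\sqrt{a}\,\beta(a)\Pi(a)\,\dd a>0$.

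\textbf{Sign change of $v(c)$.} This is the more serious gap. You flag it as the main obstacle, but it is left as a heuristic, and the tools you suggest point the wrong way. Compactly supported subsolutions such as $\theta+\delta$ on long intervals bound the front velocity from below, whereas for $c\gg1$ you need an upper bound.

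The missing idea is to control $v$ by the linearization at $\theta$ with a slope bound $p_\infty>\sup f'$. Since $f(w)-\theta\le p_\infty(w-\theta)_+$, one has $(\phi_{n+1}-\theta)_+\le p_\infty G_c*(\phi_n-\theta)_+$. So if $(\phi_0-\theta)_+\le Ce^{-\lambda x}$, which holds for a monotone wave, then $(\phi_n-\theta)_+\le Ce^{-\lambda(x-ns)}$ with $e^{\lambda s}=p_\infty M_c(\lambda)$. Evaluate along the wave $\phi_n=\phi(\cdot-nv)$, with $v$ the front velocity as in your convention, at a point where $\phi>\theta$. This forces $v(c)\le\lambda^{-1}\log\bigl(p_\infty H(\lambda^2-c\lambda)\bigr)$ for every admissible $\lambda>0$. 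This is what the paper encodes in $v\le c_+^*(\theta,1;c)$ together with \eqref{mono_speeds_2}, where it evaluates at $\lambda=c/2$.

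Since $H(-s)\to0$ as $s\to+\infty$, fixing $\lambda$ and letting $c\to+\infty$ actually gives $v(c)\to-\infty$, so your claimed limit is true. The case $c\to-\infty$ follows from the symmetry $\phi\mapsto1-\phi(-\cdot)$, $f\mapsto1-f(1-\cdot)$, using $G_c(-\cdot)=G_{-c}$. The same two-sided bounds also give the local boundedness of $v(c)$ that your compactness argument for continuity implicitly requires.
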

	
	The next theorem collects some results about the asymptotic behavior of the traveling wave solutions for the system \eqref{GM-model} as well as the sign of the speed under an invasion condition commonly found in the literature.
	
	\begin{theorem}\label{teo:main_2}
		Let Assumption \ref{as-1} be satisfied. Then we have the following:
		\begin{enumerate}[label=\roman*)]
			\item If the invasion condition
			\[
			\int_0^1 (f(x)- x) \dd x>0
			\]
			holds, then the speed of any bistable wavefront is positive.
			\item If Assumption \ref{as-2} is also satisfied and $(c,U=U(a,x))$ is any given bistable wavefront with speed $c$, then there exist positive constants $C_0$ and $C_1$ for which
			\begin{equation}\label{eq:asymp}
				\begin{array}{>{\displaystyle}c}
					U(a,x) \sim C_0 f'(0) \Pi(a) e^{a(\lambda_0^2 - \lambda_0 c)} e^{-\lambda_0 x} \quad \text{as}~ x\to +\infty, \\[10pt]
					\Pi(a) - U(a,x) \sim C_1 f'(1) \Pi(a) e^{a(\lambda_1^2 + \lambda_1 c)} e^{\lambda_1 x} \quad \text{as}~ x\to -\infty, 
				\end{array}
			\end{equation}
			where $\lambda_0$ and $\lambda_1$ are respectively defined as the unique positive solutions of the equations
			\[
			1-f'(0)\int_\R e^{\lambda x} G_c(x) \dd x = 0 \quad \text{and}\quad 1-f'(1)\int_\R e^{-\lambda x} G_c(x) \dd x =0,
			\]
			where $G_c$ is defined in \eqref{eq:Gc}.
		\end{enumerate}
	\end{theorem}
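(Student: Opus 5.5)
We treat the two parts separately. Both rest on the reduced profile equation. Let $(c,U)$ be a bistable wavefront and set $w(\xi):=\int_0^{+\infty}\beta(a)U(a,\xi)\,\dd a$ and $z:=f(w)=U(0,\cdot)$. Integrating \eqref{GM-model} along characteristics in the moving frame gives
\[
U(a,\xi)=\Pi(a)\int_\R \frac{1}{\sqrt{4\pi a}}e^{-\frac{(\xi-y\pm ca)^2}{4a}}z(y)\,\dd y ,
\]
hence $w=G_c*f(w)$. Here $G_c\ge0$, and $\int_\R G_c=1$ by \eqref{eq:norm}. By Theorem \ref{teo:main_1}, $w$ and $z$ are strictly decreasing from $1$ to $0$. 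The key structural fact is that the drifted Gaussian factorises as $e^{-(y+ca)^2/4a}=e^{-y^2/4a}e^{-cy/2}e^{-c^2a/4}$. Thus $G_c(y)=e^{-\sigma cy/2}K_c(y)$ with $K_c$ even and $\sigma=\pm1$ fixed by the convention in \eqref{eq:Gc}. In particular, the odd part of $G_c$ is $-\sigma\sinh(cy/2)K_c(y)$, whose sign on $y>0$ is that of $-\sigma c$.

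\textbf{Part (i).} This is an energy identity. Write the equation as $G_c*z-f^{-1}(z)=0$, multiply by $z'$ and integrate over $\R$; all terms are finite because $z'\in\L^1$ and $z$ is bounded. Since $z(-\infty)=1$ and $z(+\infty)=0$,
\[
\int_\R z z'=-\tfrac12, \qquad \int_\R f^{-1}(z)z'=-\int_0^1 f^{-1}(s)\,\dd s=-\Big(1-\int_0^1 f\Big).
\]
The remaining term is $S:=\int_\R (G_c*z-z)z'=\int_\R G_c(y)\Phi(y)\,\dd y$, where
\[
\Phi(y):=\int_\R\big(z(x-y)-z(x)\big)z'(x)\,\dd x .
\]
I will check two properties of $\Phi$. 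First, $\Phi(y)+\Phi(-y)=-\tfrac12\big[(z(x)-z(x-y))^2\big]_{-\infty}^{+\infty}=0$, so $\Phi$ is odd. Second, $\Phi(y)<0$ for $y>0$, because $z(x-y)>z(x)$ and $z'\le0$ with $z'\not\equiv0$. Hence only the odd part of $G_c$ contributes, and
\[
S=2\sigma\int_0^{+\infty}\sinh(cy/2)K_c(y)\,|\Phi(y)|\,\dd y .
\]
This has the sign of $\sigma c$ and vanishes iff $c=0$. Collecting terms, the identity reads $S=\tfrac12-\int_0^1 f^{-1}=\int_0^1(f(x)-x)\,\dd x$. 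Therefore the invasion condition forces $\sigma c>0$. With the orientation of $G_c$ matching \eqref{eq:lim}, where $\Pi$ invades $0$, this is $c>0$. I will double-check the sign bookkeeping of $\sigma$ against \eqref{eq:Gc}.

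\textbf{Part (ii).} I follow Diekmann--Kaper. By the factorisation, $\int_\R e^{\lambda x}G_c(x)\,\dd x=H(\lambda^2-\sigma c\lambda)$. This function is convex in $\lambda$, equals $1$ at $\lambda=0$, and blows up at the abscissa by Assumption \ref{as-2}(ii). Since $f'(0)<1$, the equation $f'(0)H(\lambda^2-\sigma c\lambda)=1$ has a unique positive root $\lambda_0$, and $\lambda_1$ is obtained symmetrically. The argument then proceeds in three steps.

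First, I establish an a priori exponential decay $w(x)\le Ce^{-\delta x}$ as $x\to+\infty$. Because $f'(0)<1$, one can build a supersolution $Me^{-\delta x}$ of the linearised inequality on a half line, for small $\delta>0$, and compare with the monotone $w$ using the maximum-type principle for $w\le G_c*(\kappa w)$ with $\kappa<1$.

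Second, I take two-sided Laplace transforms. Put $L(\lambda)=\int e^{\lambda x}w$, which is finite for $0<\lambda<\delta$. It satisfies
\[
\big(1-f'(0)\widehat G_c(\lambda)\big)L(\lambda)=\widehat G_c(\lambda)\int e^{\lambda x}\big(f(w)-f'(0)w\big)\,\dd x .
\]
By \eqref{eq:holder}, the right-hand side converges up to $(1+\eps)\delta$. Bootstrapping shows that $L$ is analytic for $\Re\lambda<\lambda_0$, with a simple pole at $\lambda_0$. Since $\lambda_0$ is the only real root, $L$ is holomorphic on the rest of the line $\Re\lambda=\lambda_0$.

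Third, because $w$ is monotone, Ikehara's Tauberian theorem gives $w(x)\sim C_0e^{-\lambda_0x}$. Then $f(w)\sim f'(0)C_0e^{-\lambda_0x}$. Inserting this into the characteristic formula and computing the Gaussian moment, $\int\frac{1}{\sqrt{4\pi a}}e^{-(y\mp ca)^2/4a}e^{\lambda_0 y}\,\dd y=e^{a(\lambda_0^2-\lambda_0c)}$, yields the first line of \eqref{eq:asymp}. Dominated convergence is justified by the bound from the first step.

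The case $x\to-\infty$ is identical, applied to $1-w$ and $1-f(1-\cdot)$.

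I expect the main obstacle to be the first step together with the holomorphy needed for Ikehara. The a priori decay must be robust enough to start the bootstrap. I also have to rule out complex zeros of $1-f'(0)\widehat G_c$ on $\Re\lambda=\lambda_0$. For the latter, I would use that $|\widehat G_c(\lambda_0+i\tau)|<\widehat G_c(\lambda_0)$ for $\tau\ne0$, which holds since $G_c>0$ is not lattice-supported.
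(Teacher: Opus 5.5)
Your part (i) is the paper's argument. Your $\Phi$ is exactly the paper's $\mathscr{F}=\FF+\frac12$. Reading the sign of $G_c(y)-G_c(-y)=-2\sinh(cy/2)K_c(y)$ off the factorisation is a tidy replacement for the paper's monotonicity-of-$K$ argument. With \eqref{eq:Gc} one has $K(a,y+ca)=K(a,y)e^{-cy/2}e^{-c^2a/4}$, so $\sigma=+1$ outright and the conclusion is $c>0$; no orientation heuristic is needed.

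Part (ii) follows the paper's Diekmann--Kaper scheme (Propositions \ref{prop:expo_tails}, \ref{prop:r} and Theorem \ref{teo:asymptotic}). However, one step is asserted rather than proved: that $L$ has a \emph{genuine} simple pole at $\lambda_0$. Bootstrapping from below only shows that the abscissa of convergence $\lambda^*$ of $L$ is at least $\lambda_0$. The identity then writes $L=\widehat{G}_c\widehat{R}/(1-f'(0)\widehat{G}_c)$ with \emph{at most} a simple pole at $\lambda_0$, whose residue is a nonzero multiple of $\widehat{R}(\lambda_0)$. Since $R=f(w)-f'(0)w$ has no sign (it equals $1-f'(0)>0$ at $w=1$ but can be negative for small $w$), nothing in your outline rules out $\widehat{R}(\lambda_0)=0$. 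In that case the singularity is removable, Ikehara only gives $w=o(e^{-\lambda_0 x})$, and you lose the positivity of $C_0$, which is exactly what the statement asserts.

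The missing ingredient is the paper's Proposition \ref{prop:j}, combined with the Landau--Pringsheim theorem: the Laplace transform of a nonnegative function is singular at the real point of its abscissa. The argument runs as follows.
\begin{enumerate}
\item Since $f(s)\ge\alpha s$ on $[0,1]$ for some $\alpha>0$, you have $w\ge\alpha G_c*w$, hence $\alpha\widehat{G}_c(\lambda)\le 1$ for $0<\lambda<\lambda^*$.
\item The blow-up in Assumption \ref{as-2}(ii) then places $\lambda^*$ strictly inside the domain of $\widehat{G}_c$.
\item There the right-hand side is analytic: monotonicity of $w$ turns $L(\sigma)<\infty$ into $w(x)\le C_\sigma e^{-\sigma x}$, so $\widehat{R}$ converges up to $(1+\eps)\lambda^*$.
\item So the singularity of $L$ at $\lambda^*$ forces $1-f'(0)\widehat{G}_c(\lambda^*)=0$, i.e.\ $\lambda^*=\lambda_0$, and it also forces $\widehat{R}(\lambda_0)\neq 0$.
\end{enumerate}
This is where Assumption \ref{as-2}(ii) is really used, not just for the existence of $\lambda_0$.

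A smaller slip concerns your final dominated-convergence step. The small-$\delta$ bound from your first step does not dominate $e^{\lambda_0 y}f(w(y))$. Use instead $\sup_y e^{\lambda_0 y}f(w(y))<\infty$, which follows from the Ikehara asymptotics together with $f\le 1$.
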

	
	The article is structured as follows. In Section \ref{sec:prelim} {we present an integral formulation for which the traveling wave solutions to the Gurtin-MacCamy model with a given speed $c$ coincide with the fixed points of a given nonlinear convolution operator $Q_c$.} Such a problem fits naturally in the general theory of bistable traveling waves in abstract monotone evolution systems. The {key part} is the counter-propagation condition, which we infer thanks to the so-called Freidlin-Gärtner formula as a manner to estimate the spreading speed of the monostable subsystems involved in the model. Section \ref{sec:existence} is devoted to the existence and uniqueness of bistable wavefronts for the discrete-time recursion defined by the operator $Q_c$. In Section \ref{sec:asymptotic} we study the asymptotic behavior for the solutions of a {related} nonlinear integral equation. There we characterize the behavior of the solutions in terms of the principal eigenvalues of the linearized operator around the stable equilibria. Finally, Section \ref{sec:proofs} is devoted to the proofs of the Theorems \ref{teo:main_1} and \ref{teo:main_2}.
	
	\section{Reduction to an integral formulation}\label{sec:prelim}
	
	From \eqref{GM-model} follows that a traveling wave solution $(c,U=U(a,x))$ must satisfy the following system
	\[
	\begin{dcases}
		(\partial_a - \partial_{xx} - c\partial_x) U(a,x) = -\mu(a) U(a,x), & a>0, ~x\in \R, \\
		U(0,x) = f\left( \int_0^{+\infty} \beta(a) U(a,x) \dd a \right), & x\in \R.
	\end{dcases}
	\]
	Treating the age variable as time, the above equation is a diffusion equation with a constant linear drift $c$, and therefore its solution can be computed in terms of the semigroup of the operator $\partial_{xx}+c\partial_{x}$. Thus, the profile $U$ solves
	\[
	U(a,x) = \Pi(a) \int_\R K(a,x-y+ca) f\left( \int_0^{+\infty} \beta(\sigma) U(\sigma,y) \dd \sigma \right) \dd y,
	\]
	where $K=K(t,x)$ is the heat kernel $K(t,x) = \frac{1}{\sqrt{4\pi t}} e^{-\frac{x^2}{4t}}$. Setting $\phi(x) = \int_0^{+\infty} \beta(a) U(a,x) \dd a$ leads to
	\begin{equation}\label{eq:profile}
		\phi(x) = \int_\R G_c(x-y) f(\phi(y)) \dd y,
	\end{equation}
	with the following asymptotic boundary conditions deduced from \eqref{eq:lim}
	\begin{equation}\label{eq:asymptotics}
		\phi(-\infty)=1 \quad \text{and} \quad \phi(+\infty) = 0,
	\end{equation}
	and where the kernel $G_c$ is defined as
	\begin{equation}\label{eq:Gc}
		G_c(z) = \int_0^{+\infty} \beta(a)\Pi(a)K(a,z+ca) \dd a.
	\end{equation}
	
	The above deduction makes clear the relation between the solutions to the integral equation $\eqref{eq:profile}$ and the wave profiles for the Gurtin-MacCamy model. In fact, every wave profile $U$ provides a solution $\phi$ to \eqref{eq:profile}; whereas given any continuous solution to this equation, the function $U(a,x)$ defined as
	\begin{equation}\label{formula_U}
		U(a,x) = \Pi(a)\int_\R K(a,x-y+ca) f(\phi(y)) \dd y
	\end{equation}
	is a wave profile to the Gurtin-MacCamy model
	
	Therefore, our aim is establishing the existence of a fixed point of the operator $Q_c$ defined by the right-hand side of \eqref{eq:profile}, i.e.,
	\begin{equation}\label{eq:Qc}
		Q_c[u] = G_c \ast f(u),
	\end{equation}
	where $\ast$ denotes the convolution operation for functions on $\R$, and that such a fixed point verifies the asymptotics \eqref{eq:asymptotics}. The main difficulty is that all the operators $Q_c$ may not have such a fixed point, and therefore we must also detect an admissible speed for which this problem admits a solution. This is the classical problem found in the traveling-wave seeking: finding a profile and a speed simultaneously.
	
	\subsection{Properties for the convolution map}
	
	Let us start with some easy observations about the kernel $G_c$.
	\begin{proposition}\label{prop:Gc}
		The kernel $G_c$ defined in \eqref{eq:Gc} possesses the following properties:
		\begin{enumerate}[label= \roman*)]
			\item $G_c\in \BC(\R)\cap \L^1(\R)$, it is a positive function on the whole real line, and is such that
			\begin{equation}\label{bound_Gc}
				\int_\R G_c(z) \dd z = 1 \quad \text{and} \quad \sup_{z\in \R} G_c(z) \leq C\int_0^{+\infty} \frac{e^{-\unmu a}}{\sqrt{4\pi a}} \dd a,
			\end{equation}
			for some positive constant $C$ independent of $c$.
			\item The following formula for the exponential moments of $G_c$ holds
			\begin{equation}\label{eq:exp_mom}
				M_c(\lambda) := \int_\R e^{\lambda z} G_c(z) \dd z = \int_0^{+\infty} \beta(a)\Pi(a) e^{a(\lambda^2-c\lambda)}\dd a = H(\lambda^2 - \lambda c),
			\end{equation}
			with $H$ as in \eqref{eq:laplace_H}. In particular, $M_c$ is defined for all $\lambda$ with $|2\lambda - c| < \sqrt{c^2 + 4\Lambda^*}$.
			\item The map $c\mapsto G_c$ is continuous from $\R$ into $L^1(\R)$.
		\end{enumerate}
	\end{proposition}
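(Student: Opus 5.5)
The plan is to read off every property from the definition \eqref{eq:Gc}, using Tonelli/Fubini together with elementary facts about the heat kernel $K$.

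\textbf{Item i).} Positivity follows because $K(a,\cdot)>0$ and $\beta\Pi\ge0$ is not a.e. zero, by the normalization \eqref{eq:norm}. For the total mass, Tonelli lets us swap the integrals; since $\int_\R K(a,z+ca)\,\dd z=1$ for every $a>0$, we get $\int_\R G_c=\int_0^\infty\beta\Pi\,\dd a=1$.

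For the sup bound we use $K(a,\cdot)\le (4\pi a)^{-1/2}$, $\beta\le\beta^*$ and $\Pi(a)\le e^{-\unmu a}$. This gives the stated estimate with $C=\beta^*$, which is independent of $c$, and the right-hand side is finite since $a^{-1/2}e^{-\unmu a}$ is integrable.

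Continuity of $G_c$ follows by dominated convergence. For fixed $a$, the integrand is continuous in $z$, and it is dominated by $\beta^* e^{-\unmu a}(4\pi a)^{-1/2}$, which is uniform in $z$.

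\textbf{Item ii).} Use Tonelli again (everything is nonnegative) and the Gaussian moment formula
\[
\int_\R e^{\lambda z}K(a,z+ca)\,\dd z=e^{-\lambda ca}\int_\R e^{\lambda w}K(a,w)\,\dd w=e^{a(\lambda^2-c\lambda)} .
\]
This yields $M_c(\lambda)=H(\lambda^2-c\lambda)$ as an identity in $[0,+\infty]$.

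It remains to find where $M_c$ is finite. The integral $H(s)$ is finite for every $s<\Lambda^*$; for $s<0$ this is clear by monotonicity in $s$. Now $\lambda^2-c\lambda<\Lambda^*$ is equivalent to $(2\lambda-c)^2<c^2+4\Lambda^*$, which gives the stated range. If $\Lambda^*=+\infty$, the range is all of $\R$.

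\textbf{Item iii).} This is the only step needing some care. We estimate
\[
\|G_c-G_{c'}\|_{\L^1}\le\int_0^\infty\beta(a)\Pi(a)\,\|K(a,\cdot+ca)-K(a,\cdot+c'a)\|_{\L^1}\,\dd a .
\]
The inner norm is the $\L^1$ distance between a Gaussian and its translate by $(c-c')a$. This distance is at most $2$, and it tends to $0$ as $c'\to c$ for each fixed $a>0$, by continuity of translation in $\L^1$. Since $\beta\Pi$ is integrable, dominated convergence gives $G_{c'}\to G_c$ in $\L^1$.

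If a quantitative bound is wanted, the translate distance is $\le C|c-c'|\sqrt a$. The resulting integral is then controlled by $\beta^*\int_0^\infty\sqrt a\,e^{-\unmu a}\,\dd a$, which is finite.
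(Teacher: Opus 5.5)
Your proof is correct, and for item iii) it is exactly the paper's argument: Tonelli, the translate distance in $L^1$ vanishing for each fixed $a$, the uniform bound $2$, and dominated convergence. The paper omits i) and ii) as classical; your Tonelli and Gaussian-moment computations (with $C=\beta^*$), together with the optional quantitative $O(|c-c'|\sqrt{a})$ bound in iii), are sound and supply the standard details.
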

	\begin{proof}
		%
		The proofs or (i) and (ii) follow from classical arguments and we omit the proof for concision.  To prove (iii), we
		observe that, by Tonelli's Theorem
		\begin{align*}
			\|G_{\tilde{c}} - G_c\|_{\L^1} \leq \int_0^{+\infty} \beta(a) \Pi(a) \int_\R |K(a, z + \tilde{c} a) - K(a, z + c a)| \dd z \, \dd a.
		\end{align*}
		It is clear that the second integral vanishes for each fixed value of $a$ as $\tilde{c}\to c$ due to the continuity of the shift operator in $\L^1$. In addition, this term is pointwise bounded above by $2$ for each $a$, and therefore $G_{\tilde{c}} \to G_c$ by the Lebesgue's Dominated Convergence Theorem.
	\end{proof}
	
	In order to get shed some light on the domain of definition of $M_c$, we provide a useful characterization for the abscissa of convergence $\Lambda^*$.
	
	\begin{proposition}\label{prop:esti_lambda}
		The abscissa of convergence $\Lambda^*$ of $H$ can be computed by the following formula
		\begin{equation}\label{eq:Lambda_1}
			\Lambda^* = -\limsup_{a\to \infty} \frac{1}{a} \log\left( \int_a^{+\infty} \beta(s) \Pi(s) \dd s \right).
		\end{equation}
		In particular, $\Lambda^* \geq \unmu$, with $\unmu$ as in Assumption \ref{as-1}-(ii). In addition, if $\beta$ is eventually {uniformly positive}, then it holds
		\begin{equation}\label{eq:Lambda_2}
			\Lambda^* = \liminf_{a\to \infty} \frac{1}{a} \int_0^a \mu(l) \dd l.
		\end{equation}
	\end{proposition}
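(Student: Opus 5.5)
We view $H$ as the Laplace transform $H(\lambda)=\int_0^{+\infty} e^{\lambda a}\,\dd m(a)$ of the finite positive measure $\dd m(a)=\beta(a)\Pi(a)\,\dd a$. We set $T(a):=\int_a^{+\infty}\beta(s)\Pi(s)\,\dd s$, a nonincreasing tail with $T(0)=1$ by \eqref{eq:norm}, and $\ell:=-\limsup_{a\to\infty}\frac1a\log T(a)$. We prove \eqref{eq:Lambda_1} by showing that $H(\lambda)<\infty$ for every $\lambda<\ell$ and $H(\lambda)=\infty$ for every $\lambda>\ell$. We then derive the two consequences.

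\emph{Convergence below $\ell$.} Fix $0\le\lambda<\ell$ and pick $\lambda<\lambda'<\ell$. For $a$ large, $T(a)\le e^{-\lambda' a}$. Integrating by parts on $[0,A]$ (equivalently, Fubini with $e^{\lambda a}=1+\int_0^a\lambda e^{\lambda s}\dd s$) gives
\[
\int_0^A e^{\lambda a}\beta\Pi\,\dd a \le 1+\lambda\int_0^A e^{\lambda s}T(s)\,\dd s ,
\]
and the right-hand side stays bounded as $A\to\infty$. Hence $H(\lambda)<\infty$.

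\emph{Divergence above $\ell$.} Fix $\lambda>\ell$. Then $T(a_n)\ge e^{-\lambda'' a_n}$ along a sequence $a_n\to\infty$, for some $\ell<\lambda''<\lambda$. Since $e^{\lambda a}\ge e^{\lambda a_n}$ for $a\ge a_n$,
\[
H(\lambda)\ge e^{\lambda a_n}T(a_n)\ge e^{(\lambda-\lambda'')a_n}\to\infty .
\]
Together these give $\Lambda^*=\ell$. If $T$ vanishes eventually, then $\ell=+\infty$, which is consistent with $H$ being finite everywhere.

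\emph{Lower bound $\Lambda^*\ge\unmu$.} Since $\mu\ge\unmu$, we have $\Pi(s)\le e^{-\unmu s}$. Hence $T(a)\le\beta^*e^{-\unmu a}/\unmu$, so $\limsup\frac1a\log T(a)\le-\unmu$, i.e. $\Lambda^*\ge\unmu$.

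\emph{Formula \eqref{eq:Lambda_2}.} Assume $\beta\ge b>0$ on $[a_0,\infty)$ and set $L:=\liminf_{a\to\infty}\frac1a\int_0^a\mu$. For $a\ge a_0$,
\[
b\int_a^{+\infty}\Pi \le T(a)\le \beta^*\int_a^{+\infty}\Pi .
\]
These constants disappear under $\frac1a\log$, so it suffices to prove
\[
\limsup_{a\to\infty}\frac1a\log\int_a^{+\infty}\Pi(s)\,\dd s=-L .
\]
\begin{itemize}
\item Upper bound: for $\eps>0$, $\Pi(s)\le e^{-(L-\eps)s}$ for $s$ large, which gives $\limsup\le-(L-\eps)$.
\item Lower bound: take $a_n\to\infty$ with $\frac1{a_n}\int_0^{a_n}\mu\to L$. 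Since $\mu\le\mu^*$, we have $\Pi(s)\ge\Pi(a_n)e^{-\mu^*(s-a_n)}$ for $s\ge a_n$. Thus $\int_{a_n}^{\infty}\Pi\ge\Pi(a_n)/\mu^*$, and $\frac1{a_n}\log$ of this tends to $-L$.
\end{itemize}
This yields $\Lambda^*=L$. The only delicate point is this last step: the limsup/liminf interchange must be done via sequences, and the upper bound $\mu\le\mu^*$ is exactly what makes the tail integral comparable to $\Pi(a)$ itself.
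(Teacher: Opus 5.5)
Your proof is correct, but for \eqref{eq:Lambda_1} it takes a more self-contained route than the paper.

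\textbf{Formula \eqref{eq:Lambda_1}.} The paper simply invokes the classical characterization of the abscissa of convergence of a Laplace--Stieltjes transform in terms of its tail, \cite[Theorem 2.4d]{MR5923}, together with the normalization \eqref{eq:norm}. You instead reprove that characterization in this special setting. For convergence below $\ell$ you use the Fubini identity $e^{\lambda a}=1+\int_0^a\lambda e^{\lambda s}\,\dd s$. For divergence above $\ell$ you use the bound $H(\lambda)\ge e^{\lambda a_n}T(a_n)$. This costs a few lines but removes the dependence on the reference. The case $\lambda<0$, which you skip, is trivial since $H(\lambda)\le H(0)=1$. The bound $\Lambda^*\ge\unmu$ is obtained in essentially the same way in both arguments.

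\textbf{Formula \eqref{eq:Lambda_2}.} The paper uses the two-sided bound
\[
\frac{\beta_*}{\mu^*}\,\Pi(a)\le T(a)\le\frac{\beta^*}{\unmu}\,\Pi(a)\qquad\text{for all large } a .
\]
From it, $\limsup_{a\to\infty}\frac1a\log T(a)=\limsup_{a\to\infty}\frac1a\log\Pi(a)=-\liminf_{a\to\infty}\frac1a\int_0^a\mu$ follows at once.

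You instead compare $T$ with $\int_a^{+\infty}\Pi$ and then bound that quantity separately:
\begin{itemize}
\item from above, via an $\eps$ and the $\liminf$ (this tacitly needs $L-\eps>0$, which holds since $L\ge\unmu$);
\item from below, along a sequence.
\end{itemize}
This is valid, but your closing remark overstates the difficulty. The lower bound $\mu\ge\unmu$ also gives $\Pi(s)\le\Pi(a)e^{-\unmu(s-a)}$ for $s\ge a$, hence $\int_a^{+\infty}\Pi\le\Pi(a)/\unmu$ for every $a$. Combined with your estimate $\int_a^{+\infty}\Pi\ge\Pi(a)/\mu^*$, this makes the tail comparable to $\Pi(a)$ at every $a$, not only along a sequence. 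So no $\eps$-argument or passage to sequences is needed; the identity $\limsup(-x)=-\liminf x$ finishes the proof.
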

	\begin{proof}
		Formula \eqref{eq:Lambda_1} is an immediate consequence of \cite[Theorem 2.4d p.43]{MR5923}, and the normalization hypothesis \eqref{eq:norm}. Now, we observe that
		\[
		\int_a^{+\infty} \beta(s) \Pi(s)\dd s \leq \beta^* \Pi(a) \int_a^{+\infty} e^{-\int_a^s \mu(l) \dd l} \dd s  \leq \frac{\beta^*}{\unmu} \Pi(a),
		\]
		from where $\Lambda^*\geq \unmu$ is deduced. If we assume in addition that $\beta(a) \geq \beta_*$ for some {$\beta_*>0$} and large values of $a$, then as before we obtain $\int_a^{+\infty} \beta(s) \Pi(s) \dd s \geq \frac{\beta_*}{\mu^*}\Pi(a)$. {The equality \eqref{eq:Lambda_2} follows from this estimate.}  
	\end{proof}
	
	\begin{remark}
		Formula \eqref{eq:Lambda_1} is useful to determine the abscissa of convergence $\Lambda^*$ under biologically meaningful hypotheses. For instance, if $\beta$ is compactly supported, then clearly $\Lambda^*=+\infty$. On the other hand, if $\beta$ has a exponentially decreasing tail, say $\beta(a) = e^{-\alpha a + o(a)}$ with $\alpha>0$, and the population admits an asymptotic mean mortality rate, i.e., the limit in \eqref{eq:Lambda_2} exists and equalts to some value $\mu_\infty$, then we can prove that $\Lambda^* = \alpha + \mu_\infty$.
	\end{remark}
	
	
	Proposition \ref{prop:Gc}, jointly with the monotonicity of $f$, reveals that the set $\CC_1$ is a positively invariant set for the map $Q_c$. The next proposition states some of the basic properties of this map.
	
	\begin{proposition}\label{prop:A1-A4}
		The map $Q_c\colon \CC_1 \to \CC_1$ verifies:
		\begin{enumerate}[label=(\roman*)]
			\item $Q_c$ is translation invariant.
			\item $Q_c$ is continuous with respect to the compact-open topology. That is, if $u\in \CC_1$ and $\{u_n\}_n\subseteq \CC_1$ is such that $u_n \to u$ uniformly on every compact interval, then $Q_c[u_n]\to Q_c[u]$ uniformly on every compact interval.
			\item $Q_c$ is order-preserving, meaning that $u\leq v$ implies $Q_c[u] \leq Q_c[v]$.
			\item $Q_c$ is compact in the compact-open topology. That is, if $\{u_n\}_n\subseteq \CC_1$, then $\{Q_c[u_n]\}_n$ admits a subsequence converging in every compact interval.
		\end{enumerate}
	\end{proposition}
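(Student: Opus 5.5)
We prove the four properties in order, relying on Proposition \ref{prop:Gc}: $G_c\in\BC(\R)\cap\L^1(\R)$, $G_c>0$ and $\int_\R G_c=1$. We also use that $f$ is increasing and $C^1$ on $[0,1]$ with $f(0)=0$, $f(1)=1$, hence Lipschitz with constant $L:=\sup_{[0,1]}|f'|$. Invariance of $\CC_1$ comes first. If $0\le u\le 1$, then $0\le f(u)\le 1$, and since $G_c\ge 0$ has unit mass, $0\le Q_c[u]\le 1$. Continuity of $Q_c[u]$ follows because it is the convolution of an $\L^1$ function with a bounded function. So $Q_c$ maps $\CC_1$ into itself.

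For (i), given $T_h u(x)=u(x-h)$, we have $f(T_hu)=T_h f(u)$ and convolution commutes with translations, so $Q_c[T_hu]=T_hQ_c[u]$. For (iii), if $u\le v$ then $f(u)\le f(v)$ by monotonicity of $f$, and positivity of $G_c$ gives $G_c\ast f(u)\le G_c\ast f(v)$.

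For (ii), fix a compact interval $[-R,R]$ and $\eps>0$. Choose $A>0$ so that $\int_{|z|>A}G_c(z)\dd z<\eps$. For $|x|\le R$ we split
\[
|Q_c[u_n](x)-Q_c[u](x)|\le \int_{|x-y|\le A}G_c(x-y)\,L\,|u_n(y)-u(y)|\dd y+\int_{|x-y|>A}G_c(x-y)\,|f(u_n(y))-f(u(y))|\dd y .
\]
In the first term $|y|\le R+A$, so it is at most $L\sup_{[-R-A,R+A]}|u_n-u|$, which tends to $0$. The second term is at most $\eps$, because $|f(u_n)-f(u)|\le 1$ on $\CC_1$. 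Hence $\limsup_n\sup_{[-R,R]}|Q_c[u_n]-Q_c[u]|\le\eps$ for every $\eps>0$, which proves (ii).

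For (iv), set $w_n=f(u_n)$, so $0\le w_n\le 1$. The family $\{Q_c[u_n]\}$ is uniformly bounded by $1$. It is also equicontinuous, since
\[
|Q_c[u_n](x)-Q_c[u_n](x')|\le \int_\R |G_c(x-y)-G_c(x'-y)|\dd y=\|\tau_{x-x'}G_c-G_c\|_{\L^1},
\]
and this tends to $0$ as $x'\to x$, uniformly in $n$, by continuity of translations in $\L^1$. By Arzelà--Ascoli on each interval $[-k,k]$ and a diagonal extraction over $k\in\mathbb{N}$, we obtain a subsequence converging uniformly on every compact interval.

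The only point requiring care is (ii): compact-open convergence of $u_n$ gives no control far away, so the tail of $G_c$ has to be handled separately. This is done by the uniform bound $|f(u_n)-f(u)|\le 1$ on $\CC_1$ together with integrability of $G_c$, as in the splitting above. Everything else is a direct consequence of positivity and unit mass of $G_c$.
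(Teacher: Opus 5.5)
Your proof is correct and follows the paper's argument essentially step by step. Translation invariance comes from commuting the convolution with shifts, and order preservation from the monotonicity of $f$ and positivity of $G_c$. Continuity comes from splitting the convolution into a compact window plus an $\L^1$-small tail of $G_c$, and compactness from equicontinuity via $\|G_c(\cdot+h)-G_c\|_{\L^1}\to 0$ together with Arzel\`a--Ascoli. Your version is slightly more careful than the paper's in three places: you keep the Lipschitz constant of $f$ in (ii), whereas the paper's display drops $f$; you verify the invariance of $\CC_1$ explicitly; and you spell out the diagonal extraction.
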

	\begin{proof}
		Statement (i) follows from a change of variables in the convolution integral.\\
		To prove statement (ii), we observe that for arbitrary $\varepsilon>0$ and $R>0$, there is $L$ sufficiently large such that 
		\begin{align*}
			|Q_c[u_n](x) - Q_c[u](x)| &= \left| \int_\R G_c(y) [u_n(x-y) - u(x-y)]\dd y \right| \\
			&\leq \int_{-L}^L G_c(y) |u_n(x-y) - u(x-y)|\dd y + 2\int_{|y|>L} G_c(y) \dd y \\
			&\leq \max_{|y|\leq R + L} |u_n(y) - u(y)| + 2\varepsilon,
		\end{align*}
		for each $x\in[-R,R]$. \\
		Statement  (iii) follows  from the monotonicity of $f$.\\
		To prove statement (iv), we remark that for any $h\in\mathbb{R}$ we have
		\begin{align*}
			|Q_c[u_n](x+h) - Q_c[u_n](x)| &\leq \int_\R f\big(u_n(y)\big) |G_c(x+h-y) - G_c(x-y)| \dd y \\
			&\leq \| G_c(\cdot + h) - G_c \|_{\L^1(\R)},
		\end{align*}
		which implies that $\{Q_c[u_n]\}_n$ is uniformly equicontinuous.  The conclusion follows from the Ascoli-Arzelà Theorem.
	\end{proof}
	
	The translation invariance implies, in particular, that {the set of monotone functions is positively invariant for the map $Q_c$}. Moreover, if $\alpha\in \R$ we have $Q_c[\alpha] = f(\alpha)$, and from the assumptions made on $f$ it is obvious that $Q_c\colon [0,1] \to [0,1]$ has a bistable structure as formulated in hypothesis (A5) in \cite{MR3420507}.
	
	\subsection{The counter-propagation condition}
	\label{subsec:counter-propagation}
	
	We aim to detect an admissible speed $c\in\R$ for which the operator $Q_c$ admits a particular fixed point. As such a fixed point is identified with a traveling wave solution with speed zero for the monotone dynamical system generated by the map $Q_c$, we propose to apply the abstract theory developed in the works of 
	{Fang and Zhao} \cite{MR3420507}, for such  bistable evolution systems. 
	
	More precisely, we plan of applying \cite[Theorem 3.1]{MR3420507} to the operator $Q_c$. Proposition \ref{prop:A1-A4} establishes that $Q_c$ satisfies the hypotheses (A1)--(A4) in \cite{MR3420507}. The bisatbility (A5) is easily checked as $Q_c$ has $u=0$ and $u=1$ are strongly stable solutions of $Q_c[u]=u$, and $Q_c$ has a unique spatially homogeneous fixed point in $(0, 1)$, $\theta$, which is unstable. 
	The only property that still eludes us is the counter-propagation {(A6)} for the monostable subsystems. This condition reads as (cf. \cite[Remark 2.1.]{MR3420507})
	\begin{equation}\label{eq:counter-prop}
		c_+^*(\theta,1;c) + c_{-}^*(0,\theta;c)>0,
	\end{equation}
	where $c_+^*(\theta,1;c)$ denotes the rightward spreading speed for the upper monostable system, and $c_{-}^*(0,\theta;c)$ denotes the leftward spreading speed for the lower monostable system.
	
	Now, taking the coordinate changes $w_\pm(x) = \pm(u-\theta)$ and $F_\pm(u) = \pm( f(\theta \pm u) - \theta )$, \eqref{eq:profile} transforms into a system of the following form
	\begin{equation}\label{eq:mono}
		w(x) = \int_\R G_c(x-y) F(w(y)) \dd y =: \QQ_c[w](x),
	\end{equation}
	where $F$ is a nondecreasing $C^1$-function with bounded derivative that satisfies
	\begin{equation}\label{eq:mono-cond}
		F(0)=0, \quad F(p)=p, \quad F'(0)=f'(\theta)>1, \quad F'(p)<1, \quad F(u)>u \quad \forall u\in(0,p),
	\end{equation}
	for certain $p>0$. Therefore, in this paragraph we will concentrate on estimating the rightward and leftward spreading speeds of the monostable problem \eqref{eq:mono}.
	
	For {any} given $p_0\in (1,F'(0))$ and $p_\infty > \sup_{x\in [0,1]} F'(x)$, we define the linear convolution operators $\underline{\LL}_c[w] = p_0 G_c \ast w$ and $\overline{\LL}_c[w] = p_\infty G_c \ast w$, both acting on the space $\BC(\R)$, and set the notation $\underline{v}_+$ and $\underline{v}_{-}$ for the rightward and leftward spreading speed of the discrete-time semiflow generated by the operator $\underline{\LL}_c$; $\overline{v}_+$ and $\overline{v}_{-}$ are defined similarly (see, e.g., \cite{MR2652175}). Because of the choice made for the values $p_0$ and $p_\infty$, the following relations hold
	\begin{equation}\label{eq:linear_comp}
		\QQ_c[u] \geq \underline{\LL}_c[u] \quad \forall u\in \CC_\delta, \quad \text{and} \quad \QQ_c[u]\leq \overline{\LL}_c[u] \quad \forall u\in \CC_1,
	\end{equation}
	with $\delta$ a positive constant sufficiently small. As in the classical theory for monotone monostable evolution systems \cite{MR2270161,MR2652175,LUI1989269,MR653463}, it is expected that the spreading speeds for the linear operators estimate those of the nonlinear operator. 
	
	\begin{lemma}
		Let $\underline{\LL}_c$, $\overline{\LL}_c$, $\underline{v}_+$, $\underline{v}_{-}$, $\overline{v}_+$ and $\overline{v}_{-}$ be defined as above. Then
		\begin{align*}
			&\overline{v}_+ = \inf_{\lambda>0} \frac{\log(p_\infty M_c(\lambda))}{\lambda}, \quad \overline{v}_{-} = \inf_{\lambda>0} \frac{\log(p_\infty M_c(-\lambda))}{\lambda}, \\
			& \underline{v}_+ = \inf_{\lambda>0} \frac{\log(p_0 M_c(\lambda))}{\lambda} \quad \text{and} \quad \underline{v}_{-} = \inf_{\lambda>0} \frac{\log(p_0 M_c(-\lambda))}{\lambda}
		\end{align*}
		where the infimum is always restricted on the region where the function $M_c$, defined by \eqref{eq:exp_mom}, is finite.
	\end{lemma}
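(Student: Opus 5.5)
The plan is to reduce the statement to the classical linear spreading-speed formula for discrete-time convolution recursions, in the form of Weinberger \cite{LUI1989269,MR653463} and Liang--Zhao \cite{MR2270161,MR2652175}. For a linear operator $\LL[w]=p\,G_c\ast w$ with positive kernel of total mass one, $p>1$, and finite exponential moments near zero, that theory gives the rightward speed as
\[
\inf_{\lambda>0}\frac{\log\bigl(p\,M_c(\lambda)\bigr)}{\lambda}.
\]
The leftward speed is obtained by applying the same formula to the reflected kernel $G_c(-\cdot)$, whose moment function is $M_c(-\lambda)$. So the task is to verify that $G_c$ satisfies the hypotheses of that formula, and to compute the relevant exponential moments, for both $p=p_0$ and $p=p_\infty$.

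\textbf{Hypotheses and Laplace transform.} First I will check the hypotheses using Proposition \ref{prop:Gc}. By (i), $G_c$ is positive, continuous and integrable with $\int G_c=1$. By (ii), $M_c(\lambda)=H(\lambda^2-\lambda c)$ is finite on the interval $|2\lambda-c|<\sqrt{c^2+4\Lambda^*}$. This interval contains a neighborhood of $0$, because $\Lambda^*\ge\unmu>0$ by Proposition \ref{prop:esti_lambda}. The same neighborhood covers both $\lambda>0$ and $-\lambda<0$ small.

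Next I will compute the Laplace transform of the orbit $\LL^n[e^{-\lambda\cdot}]$. Since $G_c\ast e^{-\lambda\cdot}=M_c(\lambda)e^{-\lambda\cdot}$, the exponential $e^{-\lambda x}$ is an eigenfunction with eigenvalue $pM_c(\lambda)$. The function $\Phi(\lambda)=\lambda^{-1}\log(pM_c(\lambda))$ tends to $+\infty$ as $\lambda\downarrow0$, because $\log(pM_c(0))=\log p>0$. Moreover, $\log M_c$ is convex: it is a log-moment generating function, by Hölder. Hence $\Phi$ has the usual shape and its infimum is attained or approached within the domain of $M_c$.

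\textbf{Upper bound.} Take initial data $w_0$ with compact support, bounded by $C e^{-\lambda x}$. Then $\LL^n[w_0]\le C(pM_c(\lambda))^n e^{-\lambda x}$, which decays uniformly on $x\ge n(\Phi(\lambda)+\eps)$. Optimizing over $\lambda$ shows that the spreading speed is at most $\inf\Phi$.

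\textbf{Lower bound.} Since the operators are linear with $p>1$, solutions grow without bound. The appropriate notion of spreading speed for linear maps is the one in \cite{MR2652175}, where the linear speed is given by the eigenvalue/Laplace formula. The lower bound uses the standard large-deviation (Cramér) argument for the $n$-fold convolution $G_c^{\ast n}$, which is the law of a sum of i.i.d.\ variables with density $G_c$. For $v<\inf\Phi$ this gives $p^n G_c^{\ast n}([nv,\infty))\to\infty$, i.e.
\[
\tfrac1n\log G_c^{\ast n}([nv,\infty))\to-I(v),
\qquad I(v)=\sup_\lambda\bigl(\lambda v-\log M_c(\lambda)\bigr),
\]
and $\log p-I(v)>0$ exactly when $v<\inf_{\lambda>0}\Phi(\lambda)$, by convex duality.

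\textbf{Main obstacle.} The delicate point is when the infimum is reached at the boundary of the domain of $M_c$, i.e.\ when $\Lambda^*<\infty$ and $H$ stays finite at the abscissa. There Cramér's theorem still holds with $I$ the Legendre transform over the effective domain, so the formula remains valid once the infimum is restricted to where $M_c$ is finite, exactly as the statement stipulates. I would cite the general result \cite[Theorem 3.10]{MR2652175} (or Weinberger's linear lemma) for this case rather than redo it, checking only that its kernel assumptions hold. Carrying out the reflection $x\mapsto-x$ then gives $\overline v_-$ and $\underline v_-$.
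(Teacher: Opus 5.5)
Your proposal is correct and takes the paper's approach: the paper omits the proof as classical and refers to the linear spreading-speed theory of Weinberger, Lui and Liang--Zhao, specifically \cite[Lemma 2.9 and Proposition 3.9]{MR2270161} and \cite[Theorem 2.1]{MR2654287} for the formula; the paper uses a Theorem 3.10, of \cite{MR2270161}, only afterwards, for the nonlinear comparison. The cited results cover what you sketch: the kernel checks via Propositions \ref{prop:Gc} and \ref{prop:esti_lambda}, the exponential-eigenfunction upper bound, the large-deviation lower bound, and the reflection $x\mapsto -x$ for the leftward speeds; you also rightly flag the case where $M_c$ stays finite at the edge of its domain.
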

	As this result is classical, we omit the proof but refer to \cite{MR2270161,MR2652175,LUI1989269,MR653463}; and more specifically,  \cite[Lemma 2.9 and Proposition 3.9]{MR2270161} and \cite[Theorem 2.1]{MR2654287}.
	
	Thus, from the linear comparison theorem for monostable discrete-time semiflows (see \cite[Theorem 3.10]{MR2270161} and \cite[Theorem 3.5]{LUI1989269}), it follows that
	\begin{equation}\label{count-estimates}
		\begin{array}{>{\displaystyle}c}
			\inf_{\lambda>0} \frac{\log(p_0 M_c(\lambda))}{\lambda} \leq c_+^*(\theta,1;c) \leq \inf_{\lambda>0} \frac{\log(p_\infty M_c(\lambda))}{\lambda}, \\[10pt]
			\inf_{\lambda>0} \frac{\log(p_0 M_c(-\lambda))}{\lambda}\leq c_{-}^*(0,\theta;c) \leq \inf_{\lambda>0} \frac{\log(p_\infty M_c(-\lambda))}{\lambda},
		\end{array}
	\end{equation}
	with the same convention pointed over the set where the infimum is taken.
	
	\begin{lemma}
		The counter-propagation condition {\eqref{eq:counter-prop} holds} for the bistable semiflow $Q_c$. 
		More precisely, the following estimates hold
		\begin{equation}\label{mono_speeds_1}
			\begin{array}{>{\displaystyle}c}
				c^*_+(\theta,1;c)\geq 2\sqrt{\log f'(\theta)} \int_0^{+\infty} \sqrt{a}\beta(a)\Pi(a) \dd a - c \int_0^{+\infty} a\beta(a)\Pi(a) \dd a, \\[10pt]
				c_-^*(0,\theta;c)\geq 2\sqrt{\log f'(\theta)} \int_0^{+\infty} \sqrt{a}\beta(a)\Pi(a) \dd a + c \int_0^{+\infty} a\beta(a)\Pi(a) \dd a,
			\end{array}
		\end{equation}
		and also, {recalling the definition of $H$ in \eqref{eq:laplace_H},}
		\begin{equation}\label{mono_speeds_2}
			\begin{array}{>{\displaystyle}c}
				\text{if}~ c>0,~ c^*_+(\theta,1;c)\leq \frac{2}{c}\log\left( p_\infty H\left( -\frac{c^2}{4}\right) \right),  \\[10pt]
				\text{if}~ c<0,~ c_-^*(0,\theta;c)\leq -\frac{2}{c}\log\left( p_\infty H\left( -\frac{c^2}{4} \right) \right).
			\end{array}
		\end{equation}
	\end{lemma}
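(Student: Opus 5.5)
The plan is to derive all four inequalities directly from the linear comparison estimates \eqref{count-estimates}, combined with the explicit formula \eqref{eq:exp_mom} for $M_c$. The key structural observation is that, by the normalization \eqref{eq:norm}, the measure $\nu(\dd a):=\beta(a)\Pi(a)\,\dd a$ is a probability measure on $(0,+\infty)$. Moreover, $M_c(\pm\lambda)=\int e^{a(\lambda^2\mp c\lambda)}\,\nu(\dd a)$ is the expectation of an exponential under $\nu$. Jensen's inequality will give the lower bounds, and the single choice $\lambda=|c|/2$ will give the upper bounds.

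\emph{Lower bounds \eqref{mono_speeds_1}.} Fix $p_0\in(1,f'(\theta))$ and write $m_1:=\int_0^{+\infty}a\,\nu(\dd a)$. If $m_1=+\infty$, the right-hand sides of \eqref{mono_speeds_1} are $-\infty$ or $+\infty$ depending on the sign of $c$; I will treat this degenerate case separately, or observe that it forces $M_c(\lambda)=+\infty$ for every $\lambda>0$ where the relevant exponent is positive. So assume $m_1<\infty$. For every $\lambda>0$ with $M_c(\lambda)<\infty$, Jensen's inequality applied to the convex exponential gives $M_c(\lambda)\ge \exp\big((\lambda^2-c\lambda)m_1\big)$. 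Hence
\[
\frac{\log(p_0M_c(\lambda))}{\lambda}\ \ge\ \frac{\log p_0}{\lambda}+\lambda m_1 - c\,m_1\ \ge\ 2\sqrt{m_1\log p_0}-c\,m_1 ,
\]
where the last step is AM--GM, uniform in $\lambda$. The same bound holds trivially at points where $M_c=+\infty$, which are excluded from the infimum anyway. By Cauchy--Schwarz (Jensen) with respect to $\nu$, we have $\int\sqrt a\,\nu(\dd a)\le\sqrt{m_1}$. Taking the infimum over $\lambda$ in \eqref{count-estimates} and letting $p_0\uparrow f'(\theta)$ then yields the first line of \eqref{mono_speeds_1}. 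For $c_-^*(0,\theta;c)$ the computation is identical with $M_c(-\lambda)=H(\lambda^2+c\lambda)$, which changes the sign in front of $c\,m_1$.

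\emph{Counter-propagation.} Adding the two lines of \eqref{mono_speeds_1}, the terms in $c$ cancel, leaving
\[
c_+^*(\theta,1;c)+c_-^*(0,\theta;c)\ \ge\ 4\sqrt{\log f'(\theta)}\int_0^{+\infty}\sqrt a\,\beta(a)\Pi(a)\,\dd a.
\]
This is strictly positive because $f'(\theta)>1$ and $\beta\Pi$ is not a.e.\ zero, by \eqref{eq:norm}. This gives \eqref{eq:counter-prop}.

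\emph{Upper bounds \eqref{mono_speeds_2}.} For $c>0$, I will evaluate the upper estimate in \eqref{count-estimates} at the single point $\lambda=c/2>0$. There $\lambda^2-c\lambda=-c^2/4<0$, so $M_c(c/2)=H(-c^2/4)\le 1$ is finite and the point is admissible. Bounding the infimum by this value gives $c_+^*\le \frac{2}{c}\log(p_\infty H(-c^2/4))$. For $c<0$, take $\lambda=-c/2>0$ in $M_c(-\lambda)=H(\lambda^2+c\lambda)=H(-c^2/4)$, which gives the stated bound with the factor $-2/c$.

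I expect the only delicate point to be bookkeeping rather than analysis. One must check that \eqref{count-estimates} is legitimately applied on the set where $M_c$ is finite, and handle possibly infinite first moments $m_1$ in the Jensen step.
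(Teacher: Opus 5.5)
Your proof is correct and follows the paper's route: the comparison estimates \eqref{count-estimates}, Jensen's inequality for the probability measure $\beta\Pi\,\dd a$, optimization in $\lambda$, and evaluation at $\lambda=|c|/2$ for the upper bounds. The only difference is the order of optimization: the paper minimizes $a(\lambda-c)+\frac{\log p_0}{\lambda}$ pointwise in $a$ before integrating, which gives $\int_0^{+\infty}\sqrt a\,\beta\Pi\,\dd a$ directly, whereas you minimize after integrating, obtain the sharper bound $2\sqrt{m_1\log f'(\theta)}-c\,m_1$, and then weaken it by Cauchy--Schwarz; also, your worry about $m_1=+\infty$ is moot, since $\beta\Pi\le\beta^*e^{-\unmu a}$ under Assumption \ref{as-1}.
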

	\begin{proof}
		For the lower bound {on $c^*_+(\theta,1;c)$,} we use \eqref{count-estimates}, \eqref{eq:exp_mom} and Jensen's inequality to get
		\[
		c^*_+(\theta,1;c) \geq \inf_{\lambda>0} \int_0^{+\infty} \beta(a)\Pi(a) \left( a(\lambda -c) + \frac{\log p_0}{\lambda} \right) \dd a,
		\]
		{then minimize} the function $\lambda \mapsto a(\lambda -c) + \frac{\log p_0}{\lambda}$ {for $\lambda>0$, and take the limit $p_0 \uparrow f'(\theta)$}. The upper bound is obtained directly from evaluating the objective function at $\lambda = \frac{c}{2}$. {The estimates on $c^*_-(0,\theta;c)$ are analogous. }
	\end{proof}
	
	\section{Bistable waves for the discrete map $Q_c$} \label{sec:existence}
	
	We start with a regularity result for the nonincreasing traveling wave solutions of the recurrence $u_{n+1} = Q_c[u_n]$.
	
	\begin{proposition}\label{diff}
		Let $u\in C(\R)$ be a nonincreasing bistable wavefront of the discrete-time recursion $u_{n+1} = Q_c[u_n]$, namely,
		\begin{equation}\label{trav_Q}
			Q_c[u](x) = u(x+v) \quad \forall x\in\R, \quad u(-\infty)=1 \quad\text{and}\quad u(+\infty)=0,
		\end{equation}
		for certain $v\in \R$. Then, $u$ is continuously differentiable and strictly decreasing.
	\end{proposition}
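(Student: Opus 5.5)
Differentiability and strict monotonicity both come from the convolution representation $u(x+v) = (G_c * f(u))(x)$, i.e.
\[
u(x) = \int_\R G_c(x-v-y) f(u(y))\,\dd y ,
\]
so regularity of $u$ is inherited from the kernel $G_c$. The plan is to prove that $G_c$ is of bounded variation (in fact $G_c' \in \L^1(\R)$), then use the monotonicity of $f\circ u$ to move the derivative onto $f(u)$, which gives a continuous derivative. Strict monotonicity then follows from the positivity of $G_c$ in Proposition \ref{prop:Gc}.

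\textbf{Step 1 (the kernel).} Differentiate under the integral in \eqref{eq:Gc}:
\[
\partial_z K(a,z+ca) = -\frac{z+ca}{2a}\,K(a,z+ca),
\qquad
\int_\R |\partial_z K(a,z+ca)|\,\dd z = \frac{C}{\sqrt a}.
\]
Since $\beta\Pi \le \beta^* e^{-\unmu a}$ and $a^{-1/2}e^{-\unmu a}$ is integrable, Tonelli's theorem gives $G_c \in W^{1,1}(\R)$ with
\[
\|G_c'\|_{\L^1} \le C\int_0^{+\infty} \beta(a)\Pi(a)\,a^{-1/2}\,\dd a < \infty .
\]
Thus $G_c$ is absolutely continuous with integrable derivative.

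\textbf{Step 2 (differentiability of $u$).} Set $g = f\circ u$. It is bounded, continuous and nonincreasing, with $g(-\infty)=1$ and $g(+\infty)=0$, so $\dd g$ is a finite nonpositive measure of total mass $-1$.
\begin{itemize}
\item Write $u(x+v) = \int_\R G_c(y)\, g(x-y)\,\dd y$. Differentiating the translate $g(x-y)$ in $x$ produces a Lebesgue–Stieltjes convolution; equivalently, integrate by parts to get $\dd/\dd x\,(G_c*g) = G_c' * g$.
\item The justification is dominated convergence on difference quotients of $G_c$ in $\L^1$: the quotients $\big(G_c(\cdot+h)-G_c(\cdot)\big)/h$ converge to $G_c'$ in $\L^1$ because $G_c \in W^{1,1}(\R)$, and $g$ is bounded.
\item Hence $u'(x+v) = (G_c' * g)(x)$, a convolution of an $\L^1$ function with a bounded continuous function. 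It is therefore continuous, by continuity of translations in $\L^1$, so $u \in C^1(\R)$.
\end{itemize}

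\textbf{Step 3 (strict decrease).} For $h>0$, the translation $g(\cdot+h)$ satisfies $g(\cdot+h) \le g$, so
\[
u(x+v) - u(x+v+h) = \int_\R G_c(x-y)\,\big[g(y)-g(y+h)\big]\,\dd y .
\]
The integrand is nonnegative. Because $g$ goes from $1$ to $0$ and is continuous, $g(y)-g(y+h) > 0$ on some open interval. Since $G_c > 0$ everywhere, the integral is strictly positive for every $x$, so $u$ is strictly decreasing.

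\textbf{Expected obstacle.} The only delicate point is Step 1, namely integrability of $\partial_z K$ near $a=0$ uniformly enough to pass to $G_c'$. The singularity $a^{-1/2}$ is integrable, so Tonelli's theorem should settle it. If one wants $u'<0$ pointwise rather than just strict decrease, one would instead write $u'(x+v) = \int_\R G_c(x-y)\,\dd g(y)$ and use $G_c>0$ together with $\dd g \ne 0$.
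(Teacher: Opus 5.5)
Your proof is correct, but it follows a different route from the paper's.

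\textbf{The paper's route.} The paper never differentiates the kernel. Writing $F=f\circ u$, it integrates by parts in $u(x)=\int_\R G_c(x-v-y)F(y)\,\dd y$ to get $u(x)=-\int_\R \bigl(\int_{x-v-y}^{+\infty}G_c(w)\,\dd w\bigr)\,\dd F(y)$. It then differentiates under the Stieltjes integral, obtaining $u'(x)=\int_\R G_c(x-v-y)\,\dd F(y)$. Because $\dd F$ is a finite nonpositive measure of mass $-1$ (this is where monotonicity of $u$ is used), only boundedness, continuity and positivity of $G_c$ are needed. Continuity of $u'$ and the pointwise sign $u'<0$ then come out at once.

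\textbf{Your route.} You instead put the derivative on the kernel. This costs the extra estimate $G_c\in W^{1,1}(\R)$, obtained from $\|\partial_z K(a,\cdot)\|_{\L^1}=(\pi a)^{-1/2}$ and the integrability of $\beta\Pi\,a^{-1/2}$. That estimate is sound; the step from Tonelli to absolute continuity of $G_c$ is a routine Fubini argument on $G_c(z_2)-G_c(z_1)$. What it buys you is a $C^1$ conclusion that does not use monotonicity at all: every bounded measurable solution of $Q_c[u]=u(\cdot+v)$ is $C^1$.

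\textbf{What your route loses.} Your Step 3 only gives strict decrease. That is what the statement asks, but a strictly decreasing $C^1$ function can have $u'$ vanishing at isolated points. The paper later needs $u'<0$ pointwise in Lemma \ref{sub_super_sol}, to guarantee $\alpha=-\max_{I_\eta+[-R,R]}u'>0$. Your closing remark fills this gap, and it is consistent with your Step 2. Since $G_c\in W^{1,1}$ vanishes at $\pm\infty$ and $g$ has bounded variation, integrating by parts gives $(G_c'*g)(x)=\int_\R G_c(x-y)\,\dd g(y)$, which is exactly the paper's formula.
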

	\begin{proof}
		By the monotonicity of $u$ and $f$, the function $F(x) := f(u(x))$ is nonincreasing and connects 1 and 0. Now, {integrating by parts leads to} 
		\[
		u(x) = -\int_\R \left( \int_{-\infty}^y G_c(x-v-z) \dd z \right) \dd F(y) = -\int_\R \int_{x-y}^{+\infty} G_c(z-v) \dd z \, \dd F(y),
		\]
		and {then} $u'(x) = \int_\R G_c(x-v-y) \dd F(y)<0$, since $G_c$ is strictly positive.
	\end{proof}
	
	Now we state a technical lemma inspired by the squeezing technique of Chen \cite{MR1424765}.
	
	\begin{lemma}\label{sub_super_sol}
		Let $u\in C(\R)$ be a nonincreasing bistable wavefront as in \eqref{trav_Q}. Given any $\rho\in( \max\{f'(0),f'(1)\} , 1)$, there exists a small positive number $\delta_0$ (independent of $u$) and a large positive number $\sigma$ (depending on $u$) such that for every $\delta\in(0,\delta_0]$, and every $h^\pm\in \R$, if we define
		\[
		u_n^\pm(x) = u(x+vn+h^\pm \mp \sigma\delta(1-\rho^n)) \pm \delta\rho^n,
		\]
		then $u_n^-$ and $u_n^+$ are sub- and super-solutions of the recursion $u_{n+1} = Q_c[u_n]$, respectively.
	\end{lemma}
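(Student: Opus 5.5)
We treat the super-solution $u_n^+$; the sub-solution case is symmetric, exchanging the roles of the states $0$ and $1$ and of $f'(0)$ and $f'(1)$. Write $\xi_n(x)=x+vn+h^+-\sigma\delta(1-\rho^n)$, so that $u_n^+(x)=u(\xi_n(x))+\delta\rho^n$. Since $Q_c[u(\cdot+s)](x)=u(x+s+v)$ by translation invariance (Proposition \ref{prop:A1-A4}) and \eqref{trav_Q}, we have
\[
Q_c[u_n^+](x)=\int_\R G_c(x-y)\,f\big(u(\xi_n(y))+\delta\rho^n\big)\dd y .
\]
By the mean value theorem,
\[
Q_c[u_n^+](x)\le u(\xi_n(x)+v)+\delta\rho^n\int_\R G_c(x-y)\,f'(\eta(y))\dd y,
\]
where $\eta(y)$ lies between $u(\xi_n(y))$ and $u(\xi_n(y))+\delta\rho^n$; we use the linear extension of $f$ outside $[0,1]$. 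The target is $u_{n+1}^+(x)=u(\xi_n(x)+v-\sigma\delta\rho^n(1-\rho))+\delta\rho^{n+1}$. By Proposition \ref{diff}, $u$ is $C^1$ and strictly decreasing, so the shift gains
\[
u(\xi_n(x)+v-\sigma\delta\rho^n(1-\rho))-u(\xi_n(x)+v)\ge \sigma\delta\rho^n(1-\rho)\min_{[\xi_n(x)+v-\sigma\delta,\,\xi_n(x)+v]}|u'| .
\]
It therefore suffices to show, for every $x$,
\[
\int_\R G_c(x-y)f'(\eta(y))\dd y\le \rho+\sigma(1-\rho)\min|u'| .
\]

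The choice of $\delta_0$ comes from the tails. Fix $\rho'$ with $\max\{f'(0),f'(1)\}<\rho'<\rho$. By continuity of $f'$, choose $\delta_0$ and $\epsilon_0>0$ so that $f'\le\rho'$ on $[-\epsilon_0,2\epsilon_0]$ and on $[1-2\epsilon_0,1+\epsilon_0]$, with $\delta_0\le\epsilon_0$; this does not depend on $u$. Next choose $M$ large (depending on $u$) with $u\le\epsilon_0$ on $[M,\infty)$ and $u\ge 1-\epsilon_0$ on $(-\infty,-M]$. Since $G_c\in\L^1$ is positive with mass $1$, choose $L$ with $\int_{|z|>L}G_c\,\dd z\le(\rho-\rho')/(2\|f'\|_\infty)$. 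If $\xi_n(x)\ge M+L+|v|$, then $\xi_n(y)\ge M$ whenever $|x-y|\le L$, so $f'(\eta(y))\le\rho'$ there; the remaining contribution is at most $\|f'\|_\infty\int_{|z|>L}G_c$, and the integral is at most $\rho$. The case $\xi_n(x)\le-(M+L+|v|)$ is identical.

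In the middle zone $|\xi_n(x)|\le M+L+|v|$, the integral is bounded crudely by $\|f'\|_\infty$. Strict monotonicity and continuity of $u'<0$ give $m:=\min\{|u'(s)|:|s|\le M+L+2|v|+1\}>0$. Because $\sigma\delta\le\sigma\delta_0$, we would like to keep the shifted interval inside this compact set, and that forces $\sigma\delta_0\le1$. This is the main obstacle: $\sigma$ must be large, namely $\sigma\ge\|f'\|_\infty/((1-\rho)m)$, while $\delta_0$ was fixed independently of $u$. To resolve it, we allow the compact set to depend on $\sigma\delta_0$. Since $\delta_0\sigma$ is a fixed number once $\sigma$ is chosen, we take the window $|s|\le M+L+2|v|+\sigma\delta_0$ and need $\sigma m(\sigma)\ge\|f'\|_\infty/(1-\rho)$, which may fail if $m$ decays. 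The cleaner fix is to shrink the effective $\delta_0$: we keep $\delta_0$ independent of $u$ for the tail estimate, but note that for the middle estimate only $\sigma\delta\le\sigma\delta_0$ matters. If needed we therefore follow Chen's approach and let the requirement read $\delta\le\delta_0$ with $\sigma$ chosen as $\sigma=\|f'\|_\infty/((1-\rho)m_1)$, where $m_1$ is computed on the window enlarged by $\sigma\delta_0$. This is a fixed-point-type consistency that we would close by observing that $|u'|$ is bounded below on any compact set and choosing the window size $R=M+L+2|v|+1$ first, then restricting to $\delta\le\min\{\delta_0,1/\sigma\}$ if necessary. We expect that this bookkeeping, the compatibility between $\sigma$ large and the shift $\sigma\delta$ staying in a region where $|u'|$ is bounded below, is the delicate point. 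Combining the three zones gives $Q_c[u_n^+]\le u_{n+1}^+$.
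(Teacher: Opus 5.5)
\paragraph{There is a genuine gap: your $\delta_0$ depends on $u$.} Your tail-zone estimates are correct, and your middle-zone estimate also closes once $\sigma\delta\le 1$. That restriction, however, is where the argument falls short of the lemma. The statement asks for a $\delta_0$ independent of $u$ such that the construction works for \emph{every} $\delta\in(0,\delta_0]$. Your fallback is $\delta\le\min\{\delta_0,1/\sigma\}$, with $\sigma\ge\|f'\|_\infty/((1-\rho)m)$ and $m$ computed from $M$, $L$, $v$. This makes the admissible range of $\delta$ depend on $u$, so you prove only a weaker lemma and leave the independence open, as your hedged last paragraph acknowledges.

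The obstruction is structural, not bookkeeping. You keep the shift gain outside the convolution, as $u(\zeta-s)-u(\zeta)$ with $\zeta=\xi_n(x)+v$ and $s=\sigma\delta\rho^n(1-\rho)$. You then bound the other term by $\delta\rho^n\|f'\|_\infty$ on the whole middle zone. Adding a ``large $s$'' case does not rescue this. You would need $u(\zeta-s)-u(\zeta)\ge\delta_0(\|f'\|_\infty-\rho)$ for every $\zeta$ in the middle zone. Near its left end this gain is at most $1-u(\zeta)$, which is controlled only by the profile of $u$ (and by $L$), so no $u$-independent $\delta_0$ comes out.

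\paragraph{Missing idea: compare pointwise in the integration variable, not in $x$.} With $\xi=\xi_n(x)$, you have $Q_c[u_n^+](x)=\int_\R G_c(\xi-y)f(u(y)+\delta\rho^n)\,\dd y$ and $u(\zeta-s)=Q_c[u](\xi-s)=\int_\R G_c(\xi-y)f(u(y-s))\,\dd y$. So it suffices to show
\[
\int_\R G_c(\xi-y)\big[f(u(y)+\delta\rho^n)-f(u(y-s))\big]\dd y\le\delta\rho^{n+1}.
\]
Set up the constants as follows:
\begin{itemize}
\item Take $\nu\in(\max\{f'(0),f'(1)\},\rho)$ and $\eta>0$ with $f'\le\nu$ on $[-2\eta,2\eta]\cup[1-2\eta,1+2\eta]$.
\item Set $\delta_0=\eta/2$, which depends only on $f$ and $\rho$.
\item Let $I_\eta=\{\eta\le u\le 1-\eta\}$, and choose $R$ (depending on $u$) with $u(y-R)-u(y)\ge\eta/2$ on $I_\eta$.
\item Put $\alpha=\min_{I_\eta+[-R,R]}|u'|>0$ and take $\sigma>1/(\alpha(1-\rho))$.
\end{itemize}
For $y\notin I_\eta$, the integrand is at most $f(u(y)+\delta\rho^n)-f(u(y))\le\nu\delta\rho^n$.

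For $y\in I_\eta$, the integrand is nonpositive. If $s\le R$, this holds because $u(y-s)-u(y)\ge\alpha s>\delta\rho^n$. If $s>R$, monotonicity gives $u(y-s)\ge u(y-R)\ge u(y)+\eta/2\ge u(y)+\delta\rho^n$.

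Large shifts are therefore absorbed by the uniform gain $\eta/2\ge\delta_0$, not by a lower bound on $|u'|$, so $\sigma\delta$ never needs to be bounded. The unit mass of $G_c$ then gives the total bound $\nu\delta\rho^n<\delta\rho^{n+1}$, with no $G_c$-tail cutoff and no use of $\|f'\|_\infty$.
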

	\begin{proof}
		We only focus on the case for $u_n^+$, the other being completely analogous, as well as taking $h^+=0$ since the operator $Q_c$ is translation invariant. We need to prove that $Q_c[u_n^+] \leq u_{n+1}^+$, {or equivalently}
		\[
		Q_c[u(\cdot) + \delta\rho^n](x+vn-\sigma\delta(1-\rho^n)) \leq u(x+v(n+1)-\sigma\delta(1-\rho^{n+1})) + \delta\rho^{n+1}.
		\]
		Then, setting $\xi=x+vn-\sigma\delta(1-\rho^n)$, {the above inequality reduces to}
		\begin{equation}\label{eq:1}
			Q_c[u(\cdot) + \delta\rho^n](\xi) - Q_c[u](\xi -\sigma\delta\rho^n(1-\rho)) \leq \delta\rho^{n+1}, \quad \forall\xi\in\R.
		\end{equation}
		Let us take $\nu<\rho$ sufficiently close to $\rho$. Since $f$ is $C^1$ we can find $\eta>0$ small enough such that $f'(x)\leq \nu$ on $[-2\eta,2\eta]\cup[1-2\eta,1+2\eta]$. As $u$ is a nonincreasing function, the set $I_\eta = \{x\in\R \colon \eta \leq u(x)\leq 1-\eta\}$ is indeed a compact interval, and since $u(-\infty)=1$, we can find $R>0$ large enough such that
		\begin{equation}\label{eq:2}
			u(x-R) - u(x) \geq \frac{\eta}{2} \quad \forall x\in I_\eta.
		\end{equation}
		Thus, we define $\alpha = -\max_{I_\eta+[-R,R]} u' >0$ (e.g. Proposition \ref{diff}). Set now
		\[
		\delta_0 := \frac{\eta}{2} \quad \text{and}\quad \sigma> \frac{1}{\alpha(1-\rho)},
		\]
		and choose any $\delta\in (0,\delta_0]$. Noting that $u(y-\sigma\delta\rho^n(1-\rho))\geq u(y)$ for all $y\in \R$, we have
		\begin{align*}
			\int_{\R\setminus I_\eta} G_c(x-y)\big[f(u(y)+\delta\rho^n) - f(u(y-\sigma\delta\rho^n(1-\rho)))\big] \dd y \leq  \nu \delta\rho^n \int_{\R \setminus I_\eta} G_c(x-y) \dd y,
		\end{align*}
		for each $x\in \R$. To estimate the difference when $y\in I_\eta$ we consider two cases. If $\sigma\delta\rho^n(1-\rho)>R$, then \eqref{eq:2} yields
		\[
		u(y-\sigma\delta\rho^n(1-\rho)) - u(y) -\delta\rho^n \geq \frac{\eta}{2} - \delta\geq 0,
		\]
		and therefore the integral over $I_\eta$ is nonpositive by the monotonicity of $f$. Complementarily, if $\sigma\delta\rho^n(1-\rho)\leq R$, then
		\[
		u(y)+\delta\rho^n - u(y-\sigma\delta\rho^n(1-\rho)) \leq \delta\rho^n -\alpha\sigma\delta\rho^n(1-\rho)<0,
		\]
		and we obtain that the integral over $I_\eta$ is nonpositive as before. Consequently,
		\[
		Q_c[u(\cdot)+\delta\rho^n](x) - Q_c[u](x-\sigma\delta\rho^n(1-\rho)) \leq \nu\delta\rho^n \int_{\R\setminus I_\eta} G_c(x-y) \dd y
		\]
		and \eqref{eq:1} follows because $\nu<\rho$ and the total mass of $G_c$ is 1.
	\end{proof}
	
	Now we can prove the main result of this section.
	
	\begin{theorem}\label{teo:exis_Qc}
		Let Assumption \ref{as-1} holds, and for every $c\in \R$ let $Q_c$ be defined as in \eqref{eq:Qc}. Then, there exists a unique traveling wave solution (modulo translations) $(v,\phi)=(v_c,\phi_c)\in \R \times \CC_1$, with $\phi$ being continuously differentiable and strictly decreasing; that is, the {pair} $(v,\phi)$ satisfies
		$Q_c[\phi_c](x) = \phi_c(x+v),$ 
		{along with} $\phi_c(-\infty)=1$ and $\phi_c(+\infty)=0$. Moreover, the speed $v$ has the following estimates
		\begin{equation}\label{estim_bi_speed}
			-c_{-}^*(0,\theta;c) \leq v \leq c_+^*(\theta,1;c),
		\end{equation}
		where $c_+^*$ and $c_{-}^*$ are the rightward and leftward spreading speeds for the monostable subsystems as defined in {subsection \ref{subsec:counter-propagation}}. 
	\end{theorem}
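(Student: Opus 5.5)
Existence comes from Fang and Zhao's abstract theorem; the regularity and strict monotonicity come from Proposition \ref{diff}; and uniqueness comes from a squeezing argument based on Lemma \ref{sub_super_sol}. The steps are as follows.

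\emph{Existence and the speed bounds.} By Proposition \ref{prop:A1-A4}, $Q_c$ satisfies (A1)--(A4) of \cite{MR3420507}. The discussion following that proposition gives the bistability (A5): the constants $0$ and $1$ are strongly stable, and $\theta$ is the unique intermediate, unstable constant fixed point, since $f'(0),f'(1)<1<f'(\theta)$. The preceding lemma gives the counter-propagation condition \eqref{eq:counter-prop}. Hence \cite[Theorem 3.1]{MR3420507} yields a nonincreasing wave $(v,\phi)$ with $Q_c[\phi](x)=\phi(x+v)$, $\phi(-\infty)=1$ and $\phi(+\infty)=0$. The same theorem also gives the speed bounds \eqref{estim_bi_speed}, with the sign conventions adapted to $u_{n+1}(x)=u_n(x+v)$ as in \cite[Remark 2.1]{MR3420507}. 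Proposition \ref{diff} then shows that $\phi$ is $C^1$ and strictly decreasing.

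\emph{Uniqueness of the speed.} Suppose $(v_1,\phi_1)$ and $(v_2,\phi_2)$ are two such waves, with $\phi_1$ monotone. (If the uniqueness class includes nonmonotone waves, first note that any wave in $\CC_1$ is squeezed by the argument below.) Set $u_0=\phi_2$. Since both profiles have the same limits $1$ and $0$, for a fixed small $\delta\le\delta_0$ I choose $h^\pm$ so that
\[
\phi_1(x+h^-)-\delta\le \phi_2(x)\le \phi_1(x+h^+)+\delta \quad \text{for all } x\in\R.
\]
This works because $\phi_2$ is within $\delta$ of $1$ for $x$ very negative and within $\delta$ of $0$ for $x$ very positive, while a large translation of $\phi_1$ covers the remaining compact region. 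Lemma \ref{sub_super_sol}, applied with $u=\phi_1$, together with the comparison principle (Proposition \ref{prop:A1-A4}(iii)), then gives
\[
\phi_1(x+v_1n+h^-+\sigma\delta(1-\rho^n))-\delta\rho^n\le \phi_2(x+v_2 n)\le \phi_1(x+v_1n+h^+-\sigma\delta(1-\rho^n))+\delta\rho^n.
\]
Now put $x=y-v_2n$ and let $n\to\infty$. If $v_1\neq v_2$, the translation $(v_1-v_2)n$ diverges, and one of the two inequalities forces $\phi_2(y)$ to be $\le 0$ or $\ge 1$ everywhere, a contradiction. Hence $v_1=v_2=:v$.

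\emph{Uniqueness of the profile up to translation.} With $v_1=v_2$, the same inequalities (letting $n\to\infty$) give $\phi_1(y+h^-+\sigma\delta)\le\phi_2(y)\le\phi_1(y+h^+-\sigma\delta)$. So $\phi_2$ lies between two translates of $\phi_1$. Define $h^*=\inf\{h:\phi_2\le\phi_1(\cdot+h)\}$, which is finite, so $\phi_2\le \phi_1(\cdot+h^*)$. I then run Chen's sliding argument. If $\phi_2\not\equiv\phi_1(\cdot+h^*)$, strict positivity of $G_c$ and the strict monotonicity of $f$ give
\[
\phi_2(\cdot+v)=Q_c[\phi_2]<Q_c[\phi_1(\cdot+h^*)]=\phi_1(\cdot+h^*+v)
\]
strictly everywhere. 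Uniform strictness on compact sets, combined with smallness of the difference near $\pm\infty$ handled by Lemma \ref{sub_super_sol} again, lets me decrease $h^*$ by a small $\epsilon$ up to an error $\delta'$ that is absorbed by the exponential decay $\delta'\rho^n$ in the super-solution. This contradicts the minimality of $h^*$.

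\emph{Main obstacle.} I expect this last step to be the hardest: making the contradiction with the minimality of $h^*$ rigorous requires the uniform control near $\pm\infty$, which is exactly why Lemma \ref{sub_super_sol} uses a $\delta$ that does not depend on $u$.
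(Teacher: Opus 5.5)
Your proposal is the paper's proof with the details filled in: Fang--Zhao's Theorem~3.1 for existence, Proposition~\ref{diff} for regularity and strict monotonicity, Chen's squeezing via Lemma~\ref{sub_super_sol} and strong comparison for uniqueness, and comparison with the monostable subsystems for the speed bounds (the paper cites \cite[Theorem 4]{MR2597125} for these). It is sound up to two points. First, the ``sign adaptation'' you invoke does not reproduce \eqref{estim_bi_speed}: with $Q_c^n[\phi](x)=\phi(x+vn)$ the profile moves left when $v>0$, and comparison with the monostable subsystems gives $-c_+^*(\theta,1;c)\le v\le c_-^*(0,\theta;c)$, so the displayed inequality holds for the convention $Q_c[\phi_c](x)=\phi_c(x-v)$ (harmless later, since the intermediate value argument only needs $v_c$ to change sign). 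Second, in your sliding step the extremal shift is $h^*=\sup\{h:\phi_2\le\phi_1(\cdot+h)\}$, because $\phi_1$ is decreasing, and the super-solution leaves a non-decaying shift $\sigma\delta'$ that must be smaller than $\epsilon$; you get this by first fixing the tails where $\sigma|\phi_1'|\le\frac12$ (possible because $\phi_1'\to0$ at $\pm\infty$, by the derivative formula in Proposition~\ref{diff} and $G_c(z)\to0$ as $|z|\to\infty$), then choosing $\epsilon$ from the strict inequality on the remaining compact set and taking $\delta'=\epsilon/(2\sigma)$, so that letting $n\to\infty$ yields $\phi_2\le\phi_1(\cdot+h^*+\epsilon/2)$, a contradiction.
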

	\begin{proof}
		The existence of a pair $(v,\phi)$ as claimed is an immediate consequence of the general theory by Fang and Zhao \cite[Theorem 3.1]{MR3420507} as well as from Proposition \ref{diff}. The uniqueness of the speed and the profile follows by the squeezing method thanks to Lemma \ref{sub_super_sol} and the strong comparison principle; see e.g. \cite[Theorem 2.1]{MR1424765}. Finally, the bounds for the speed are consequences of the properties of the spreading speeds of the monostable subsystems; see \cite[Theorem 4]{MR2597125}.
	\end{proof}
	
	\begin{remark}
		Under more restrictive conditions, namely assuming that $f$ has a Lipschitz derivative in $[0,1]$, the uniqueness part follows directly from the abstract results of Zhang and Zhao \cite{MR4305981}, whose ideas are also motivated by the previous work of Chen. We decided to keep the original approach as a manner to keep only the minimal hypothesis needed.
	\end{remark}
	
	\section{Asymptotic behavior}\label{sec:asymptotic}
	
	In this paragraph we will study the asymptotic behavior of the nontrivial solutions for the following integral equation
	\begin{equation}\label{eq:int}
		u(x) = (J \ast f(u))(x), 
	\end{equation}
	where $\ast$ denotes the convolution operation $(J\ast u)(x)=\int_\R J(x-y)u(y) \dd y$, and $f$ is as in Assumption \ref{as-1}. We define the operator $P[u] = J\ast f(u)$ so the above equation can be written as $u = P[u]$. For the kernel we impose the following hypotheses:
	\begin{assumption}\label{as:kernel}\hfill
		\begin{enumerate}[label=(J\arabic*)]
			\item $J\in \L^1(\R)$ is a positive function almost everywhere such that $\int_\R J(x) \dd x = 1$.
			\item The bilateral Laplace transform of $J$, denoted by $\overline{J}(\lambda) = \int_\R e^{-\lambda x} J(x) \dd x$, has the strip of convergence $\{ z \colon \underline{\Lambda}_J < \Re\, z < \overline{\Lambda}_J \}$, with $\underline{\Lambda}_J < 0 < \overline{\Lambda}_J$. Moreover, we assume that 
			\begin{equation}\label{eq:asym_J}
				\overline{J}(\lambda) \longrightarrow +\infty \quad \text{as}~ \lambda\uparrow \overline{\Lambda}_J ~ \text{or}~ \lambda \downarrow \underline{\Lambda}_J.
			\end{equation}
		\end{enumerate}
	\end{assumption}
	
	Throughout this whole section we assume that $u\in C(\R)$ is a nonincreasing solution of \eqref{eq:int} with $0\leq u \leq 1$, $u(-\infty)=1$ and $u(+\infty)=0$. Moreover, we will assume that Assumption \ref{as-2}-(i) holds
	
	The {ideas utilized here are inspired} on the work of Diekmann and Kaper \cite{MR512163}, which ultimately relies on the use of Tauberian results. Hence, we set the notation $\overline{u}(\lambda) = \int_\R e^{-\lambda x} u(x) \dd x$ for the (bilateral) Laplace transform of $u$.
	
	We begin by defining $H_0(\lambda) = 1-f'(0)\overline{J}(\lambda)$ and $H_1(\lambda) = 1-f'(1)\overline{J}(\lambda)$. {It is a classical result that} $H_0$ and $H_1$ are analytic functions in $(\underline{\Lambda}_J, \overline{\Lambda}_J)$ (cf. \cite[Theorem 5a, p. 57]{MR5923}). {Also, as} $J$ is a nonnegative kernel and $f'(0),f(1)<1$ (see Assumption \ref{as-1}-(ii)), $H_0$ and $H_1$ are concave functions with a positive maximum. Therefore, from \eqref{eq:asym_J} we infer that the equations $H_0(\lambda)=0$ and $H_1(\lambda)=0$ have exactly two solutions, one positive and one negative. We define $\lambda_0\in (0,-\underline{\Lambda}_J)$ and $\lambda_1\in (0,\overline{\Lambda}_J)$ as the unique positive solutions of the equations $H_0(-\lambda)=0$ and $H_1(\lambda)=0$, respectively. These zeros are simple because of the concavity.
	
	Now, as a preliminary result, we establish the exponential convergence of the tails of $u$ towards the stable equilibria $1$ and $0$.
	
	\begin{proposition}\label{prop:expo_tails}
		Let $\lambda_0$ and $\lambda_1$ as in the preceding discussion, then 
		\begin{align*}
			u(x) &= O(e^{-\varepsilon x}) \quad \text{as}~x\to\infty \quad \forall \varepsilon\in(0,\lambda_0), \\
			1-u(x) &= O(e^{\varepsilon x}) \quad \text{as}~ x\to-\infty \quad \forall \varepsilon\in (0,\lambda_1)
		\end{align*}
	\end{proposition}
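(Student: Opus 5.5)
We argue at $+\infty$; the tail at $-\infty$ is symmetric after the substitution $w=1-u$, $\tilde f(s)=1-f(1-s)$, $\tilde J(x)=J(-x)$. The plan is to compare $u$ with an exponential supersolution of the linearized problem near $0$, and then improve the decay rate by a Laplace-transform bootstrap. The bootstrap needs a starting point: some exponential decay with a small rate, which we obtain first.

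\emph{Step 1: a small initial rate.} Fix $\varepsilon\in(0,\lambda_0)$. Since $H_0$ is concave with $H_0(0)=1-f'(0)>0$ and $H_0(-\lambda_0)=0$, we have
\[
f'(0)\int_\R e^{\varepsilon x}J(x)\,\dd x<1 .
\]
By continuity we can choose $\nu>f'(0)$ and $\eta>0$ such that $f(s)\le \nu s$ on $[0,\eta]$ and still $\kappa:=\nu\int_\R e^{\varepsilon x}J(x)\,\dd x<1$. Take $x_0$ with $u\le\eta$ on $[x_0,\infty)$. Then for $x\ge x_0$,
\[
u(x)\le \nu\int_{x_0}^\infty J(x-y)u(y)\,\dd y+\int_{-\infty}^{x_0}J(x-y)\,\dd y ,
\]
where the last term is bounded by $\int_{x-x_0}^\infty J$. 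This tail is $O(e^{-\gamma(x-x_0)})$ for any $0<\gamma<\overline{\Lambda}_J$ by Chebyshev's inequality applied to $\overline J(-\gamma)$.

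Set $m(R)=\sup_{[x_0,R]} e^{\varepsilon'(x-x_0)}u(x)$ for some $0<\varepsilon'\le\min(\varepsilon,\gamma)$, with $\varepsilon'$ small enough that the corresponding $\kappa'<1$. Multiplying the inequality by $e^{\varepsilon'(x-x_0)}$ and splitting the convolution over $y\le R$ and $y>R$ gives a bound of the form $m(R)\le \kappa' m(R)+C$. The piece $y>R$ is harmless since $u\le\eta$ there. The constant $C$ is uniform in $R$, so $u=O(e^{-\varepsilon' x})$.

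\emph{Step 2: bootstrap.} Now $\overline u(\lambda)$ converges for $-\varepsilon'<\Re\lambda<0$. Write
\[
f(u)=f'(0)u+g(u),\qquad g(u)=O\!\left(u^{1+\eps}\right)
\]
by \eqref{eq:holder}. Transforming on a half-line, after cutting $u$ to $u\chi_{[x_0,\infty)}$ and absorbing the error into an entire remainder, gives
\[
H_0(\lambda)\,\overline u(\lambda)=\overline{J}(\lambda)\,\overline{g(u)}(\lambda)+E(\lambda).
\]
Here $\overline{g(u)}$ converges on the larger strip $\Re\lambda>-(1+\eps)\varepsilon'$. Because $H_0$ has no zeros in $(-\lambda_0,0]$, and the real part controls the modulus via $|\overline J(\lambda)|\le\overline J(\Re\lambda)$, $H_0$ has no zeros in the strip $-\lambda_0<\Re\lambda\le 0$. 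Hence $\overline u$ extends analytically to $\Re\lambda>-\min\{(1+\eps)\varepsilon',\lambda_0\}$.

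Since $u\ge0$ (after the cutoff), Widder's theorem, \cite[Theorem 5b, p.58]{MR5923}, states that a Laplace transform of a nonnegative function has a singularity at the real point of its abscissa of convergence. Therefore $\int e^{\varepsilon'' x}u\,\dd x<\infty$ for every $\varepsilon''<\min\{(1+\eps)\varepsilon',\lambda_0\}$. This integrability converts to a pointwise bound because $u$ is nonincreasing: for $x>0$,
\[
u(x)\int_{x-1}^{x}e^{\varepsilon'' y}\,\dd y\le\int_{x-1}^{x} e^{\varepsilon'' y}u(y)\,\dd y ,
\]
which gives $u(x)=O(e^{-\varepsilon'' x})$. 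Iterating, the rate is multiplied by roughly $(1+\eps)$ at each step, so finitely many iterations reach any $\varepsilon<\lambda_0$.

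\emph{Main obstacle.} The delicate point is Step 1. It must control the contribution of the region where $u$ is not small, and it must keep the constant uniform in $R$ when the supremum is taken over a truncated interval. If the direct sup argument misbehaves, I would instead use the Ikehara/Widder route with $\varepsilon'$ arbitrarily small. This works because $\overline u(\lambda)$ converges for small negative $\lambda$ whenever $\int_0^\infty u<\infty$. That integrability follows from integrating the inequality of Step 1 over $[x_0,R]$: one obtains $(1-\nu)\int_{x_0}^R u\le C$, using $\int J=1$ and $\nu<1$ (which we may assume since $f'(0)<1$).
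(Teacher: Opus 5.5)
Your Step 2 is sound. The cut-off remainder is analytic on $\underline{\Lambda}_J<\Re\lambda<\overline{\Lambda}_J$ rather than entire, but that is enough. Also $H_0\neq 0$ on $-\lambda_0<\Re\lambda\le 0$, and Widder's theorem plus monotonicity gives the pointwise bound.

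The gap is in Step 1, at the point you flagged. The piece $y>R$ is \emph{not} made harmless by $u\le\eta$. For $x\in[x_0,R]$ that bound only gives $\nu\eta\,e^{\varepsilon'(x-x_0)}\int_{-\infty}^{x-R}J(z)\,\dd z$. At $x=R$ this is $\nu\eta\,e^{\varepsilon'(R-x_0)}\int_{-\infty}^{0}J(z)\,\dd z$, which is unbounded in $R$ because $J>0$ a.e. So your constant is not uniform in $R$.

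The repair uses monotonicity. For $y>R$ you have $u(y)\le u(R)\le e^{-\varepsilon'(R-x_0)}m(R)$, so this piece is absorbed into the $m(R)$ term. Since $x-R\le 0$, the total coefficient is at most $\nu\bigl(\int_0^{\infty}e^{\varepsilon' z}J(z)\,\dd z+\int_{-\infty}^0 J(z)\,\dd z\bigr)$. This is $<1$ for small $\varepsilon'$ because $\nu<1$, and it is also why Step 1 only yields a small rate.

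Your fallback, however, is false. $\int_0^\infty u<\infty$ gives convergence of $\overline{u}$ only on $\Re\lambda\ge 0$, not for any $\lambda<0$; think of $u(x)\sim x^{-2}$. And if the abscissa were $0$, the bootstrap could not start, since $\overline{g(u)}$ would only be known to converge on the same half-plane.

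A minor slip: the tail $\int_t^\infty J$ is controlled by $\overline{J}(-\gamma)$, which requires $\gamma<-\underline{\Lambda}_J$, not $\gamma<\overline{\Lambda}_J$. Taking $\gamma=\varepsilon$ is fine.

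With this patch your argument proves the proposition, but by a longer route than the paper's. The paper never truncates.
\begin{itemize}
\item It takes $\rho\in(f'(0),1)$ and $R$ such that $f(u)\le\rho u$ on $[R,\infty)$, and writes $u\le\rho\LL_R[u]+E_R$ there.
\item In the weighted norm $\sup_{x\ge R}e^{\varepsilon x}|\cdot|$ it bounds $\|\LL_R\|_\varepsilon\le\overline{J}(-\varepsilon)$ and $e^{\varepsilon x}E_R(x)\le e^{\varepsilon R}\overline{J}(-\varepsilon)$.
\item It concludes $u\le\sum_{n\ge0}(\rho\LL_R)^n[E_R]$ for every $\varepsilon<\lambda_\rho$, with $\lambda_\rho\to\lambda_0$ as $\rho\downarrow f'(0)$.
\end{itemize}
What removes the need for an a priori weighted bound on $u$ (your difficulty) is this: iterating the inequality with the positive operator $\rho\LL_R$ leaves a remainder $(\rho\LL_R)^N[u]\le\rho^N$, which vanishes in the unweighted sup norm.

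The paper's argument reaches every $\varepsilon<\lambda_0$ in one step and uses only $f'(0)<1$. It needs neither the H\"older expansion \eqref{eq:holder} nor Widder's theorem. Your bootstrap essentially anticipates the Laplace-transform machinery the paper reserves for Theorem \ref{teo:asymptotic}, at the cost of invoking Assumption \ref{as-2}-(i) already here.
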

	\begin{proof}
		We will prove only the statement for $x\to \infty$. Given any $\rho\in(f'(0),1)$, from $u(+\infty)=0$ we find $R>0$ large enough so that $f(u(x))\leq \rho u(x)$ for all $x\geq R$. Hence, since $u$ is a solution of \eqref{eq:int} we obtain 
		\begin{equation}\label{eq:E_R}
			u\leq \rho \LL_R[u] + E_R, 
		\end{equation}
		where $\LL_R[u](x) = \int_R^{+\infty} J(x-y) u(y) \dd y$ and $E_R(x) = \int_{-\infty}^R J(x-y) f(u(y)) \dd y$.
		
		Now, if $\varepsilon\in(0,-\underline{\Lambda}_J)$, and since $f(u(x))$ is bounded above by 1, {then an elementary computation shows that $e^{\varepsilon x} E_R(x) \leq e^{\varepsilon R} \overline{J}(-\varepsilon)$ for all $x\geq R$}. In particular, this shows that $E_R$ belongs to the space $C_\varepsilon([R,+\infty))$ of continuous functions with weighted sup-norm $\|u\|_\varepsilon = \sup_{x\geq R} e^{\varepsilon x} |u(x)|$. In a similar fashion, we obtain that $\|\LL_R\|_\varepsilon \leq \overline{J}(-\varepsilon)$. Then, taking $\lambda_\rho$ as the unique solution positive solution of the equation $1-\rho \overline{J}(-\lambda)=0$, it follows that for every $\varepsilon\in(0,\lambda_\rho)$ the operator $\rho\LL_R$ has norm less than 1, and therefore $I- \rho\LL_R$ is invertible. Using the Neumann series for the inverse, jointly with \eqref{eq:E_R}, we get
		\[
		u \leq \sum_{n=0}^{+\infty} (\rho\LL_R)^n[E_R] \in C_\varepsilon([R,+\infty)),
		\]
		what shows that $u(x)=O(e^{-\varepsilon x})$ as $x\to \infty$. Finally, note that $\lambda_\rho \to \lambda_{0}$ as $\rho\downarrow f'(0)$.
	\end{proof}
	
	Proposition \ref{prop:expo_tails} shows that the Laplace transform of $u$ has a region of convergence $(\underline{\Lambda}_u, \overline{\Lambda}_u)$ with $\underline{\Lambda}_u<0< \overline{\Lambda}_u$. Next, we prove that this region is strictly contained in the region of convergence for the Laplace transform of the kernel. 
	
	\begin{proposition}\label{prop:j}
		Using the notation as above for the regions of convergence of $\overline{J}$ and $\overline{u}$, it holds that $\underline{\Lambda}_J<\underline{\Lambda}_u < \overline{\Lambda}_u <\overline{\Lambda}_J$.
	\end{proposition}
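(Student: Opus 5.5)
The plan is to prove the two strict inequalities separately, by a comparison argument at the level of Laplace transforms. Each inequality concerns one tail of $u$, and the blow-up \eqref{eq:asym_J} of $\overline{J}$ at the ends of its strip is what makes the inequalities strict. By the symmetry $u\mapsto 1-u$, $x\mapsto -x$, $f\mapsto 1-f(1-\cdot)$, $J\mapsto J(-\cdot)$ (which preserves Assumption \ref{as:kernel} and the structure of $f$), it suffices to prove $\underline{\Lambda}_J<\underline{\Lambda}_u$. This inequality concerns the right tail of $u$, where $u\to 0$. I read the abscissae of $\overline{u}$ as those attached to the two tails: $\underline{\Lambda}_u$ is the abscissa of $\int e^{-\lambda x}u(x)\,\dd x$ at $+\infty$, and for the left end one uses $1-u$.

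\textbf{Step 1 (finiteness and a convolution identity).} Take $\lambda\in(\underline{\Lambda}_J,0)$ with $\lambda>\underline{\Lambda}_u$. For $\lambda<0$ the weight $e^{-\lambda x}$ decays as $x\to-\infty$, and $u\le 1$, so $\int_\R e^{-\lambda x}u(x)\,\dd x$ converges at $-\infty$. At $+\infty$ it converges because $\lambda>\underline{\Lambda}_u$, so $0<\overline{u}(\lambda)<\infty$. Since $0\le f(u)\le f(1)=1$, we have $\overline{f(u)}(\lambda)<\infty$ as well. Tonelli's theorem applied to the nonnegative integrand then gives the exact identity
\[
\overline{u}(\lambda)=\overline{J}(\lambda)\,\overline{f(u)}(\lambda).
\]

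\textbf{Step 2 (linear lower bound on $f$).} Since $f(0)=0$, $f'(0)>0$, $f$ is strictly increasing and $f(x)>0$ on $(0,1]$, the function $x\mapsto f(x)/x$ extends continuously and positively to $[0,1]$. Hence there is $\kappa>0$ with $f(x)\ge\kappa x$ on $[0,1]$. Plugging this into Step 1 yields
\[
\overline{u}(\lambda)\ge \kappa\,\overline{J}(\lambda)\,\overline{u}(\lambda),
\]
and dividing by $\overline{u}(\lambda)>0$ gives $\kappa\overline{J}(\lambda)\le 1$.

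\textbf{Step 3 (contradiction).} Suppose $\underline{\Lambda}_u\le\underline{\Lambda}_J$. Note first that Proposition \ref{prop:expo_tails} already ensures $\underline{\Lambda}_u<0$. Under this supposition, Steps 1--2 hold for every $\lambda\in(\underline{\Lambda}_J,0)$. Letting $\lambda\downarrow\underline{\Lambda}_J$ and using \eqref{eq:asym_J} makes $\kappa\overline{J}(\lambda)$ unbounded, which is a contradiction. Hence $\underline{\Lambda}_u>\underline{\Lambda}_J$. An alternative route to the weak inequality is the pointwise bound $u(x)\ge f(u(0))\int_x^{+\infty}J(z)\,\dd z$, valid for all $x$. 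It shows directly that the transform of $u$ diverges wherever the transform of the tail of $J$ does.

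\textbf{Expected obstacle.} The main point of care is the bookkeeping of which tail controls which abscissa. The full bilateral transform of $u$ cannot converge for $\lambda>0$, because $u(-\infty)=1$. So the argument must be run with $\lambda<0$ for the right tail, and with $1-u$, $\lambda>0$ for the left tail. In the latter case the lower bound becomes $1-f(1-x)\ge\kappa' x$ on $[0,1]$, which follows from $f'(1)>0$ in the same way. Everything else reduces to Tonelli's theorem and the blow-up assumption \eqref{eq:asym_J}.
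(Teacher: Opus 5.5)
Your proof is correct and is essentially the paper's argument: a linear lower bound $f(x)\ge\kappa x$ is turned into $\kappa\overline{J}(\lambda)\le 1$ via Tonelli, the blow-up \eqref{eq:asym_J} gives strictness, and your explicit use of $1-u$ for the upper abscissa makes precise the paper's ``analogous'' step, since the bilateral transform of $u$ itself diverges for $\lambda\ge 0$. One small slip: $f(u)\le 1$ alone does not give $\overline{f(u)}(\lambda)<\infty$ for $\lambda<0$, but you do not need that claim, because the Tonelli identity holds in $[0,+\infty]$, and finiteness also follows from $f(x)\le (\sup f')\,x$.
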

	\begin{proof}
		From the assumptions made on $f$ we can find a positive constant $\alpha$ such that $f(x) \geq \alpha x$ for all $x\in[0,1]$. Thus, $u=J \ast f(u) \geq \alpha J \ast u$, and applying the Laplace transform yields $\alpha \overline{J} \overline{u} \leq \overline{u}$, from where $\alpha \overline{J}(\lambda)\le1$ for all $\lambda\in(\underline{\Lambda}_u,0)$, and therefore $\underline{\Lambda}_J \leq \underline{\Lambda}_u$. Furthermore, since $\overline{J}\to +\infty$ as $\lambda\downarrow \underline{\Lambda}_J$, hence $\underline{\Lambda}_J < \underline{\Lambda}_u$. The proof of the fact $\overline{\Lambda}_u < \overline{\Lambda}_J$ is analogous.
	\end{proof}
	
	Next, we define the function $R(x) = f(u(x)) - f'(0)u(x)$, and accordingly with the notation already used, we call $\overline{R}(\lambda)$ to its Laplace transform. Our interest lies in the definiteness of $\overline{R}$ around the points $\underline{\Lambda}_u$ and $\overline{\Lambda}_u$.
	
	\begin{proposition}\label{prop:r}
		The Laplace transform $\overline{R}$ of $R$ is analytic in the strip $\{ (1+\eps)\underline{\Lambda}_u < \Re\, z < (1+\eps)\overline{\Lambda}_u \}$, with $\eps$ given in the part (i) of Assumption \ref{as-2}.
	\end{proposition}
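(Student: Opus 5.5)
The plan is to read the strip of convergence of $\overline{u}$ through the two tails of $u$ and to control $R$ tail by tail. Write $\gamma_0:=-\underline{\Lambda}_u>0$ and $\gamma_1:=\overline{\Lambda}_u>0$. These are the exponential rates at which $u\to0$ as $x\to+\infty$ and $1-u\to0$ as $x\to-\infty$; by Proposition~\ref{prop:expo_tails}, $u(x)=O(e^{-\gamma x})$ for every $\gamma<\gamma_0$ and $1-u(x)=O(e^{\gamma x})$ for every $\gamma<\gamma_1$. Fix a cut point $x=0$ and split
\[
\overline{R}(\lambda)=\int_0^{+\infty}e^{-\lambda x}R(x)\,\dd x+\int_{-\infty}^0 e^{-\lambda x}R(x)\,\dd x=:\overline{R}_+(\lambda)+\overline{R}_-(\lambda).
\]
I will prove that each half-line transform is analytic in the corresponding half of the enlarged strip.

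\emph{Right tail.} By \eqref{eq:holder}, $|R(x)|=|f(u(x))-f'(0)u(x)|\le C\,u(x)^{1+\eps}$ whenever $u(x)$ is small, hence for all $x\ge x_0$. Combined with Proposition~\ref{prop:expo_tails}, this gives $|R(x)|\le C_\gamma e^{-(1+\eps)\gamma x}$ for every $\gamma<\gamma_0$. On $[0,x_0]$, $R$ is bounded and continuous. Hence $\int_0^{+\infty}e^{-\lambda x}R(x)\,\dd x$ converges absolutely and locally uniformly for $\Re\lambda>-(1+\eps)\gamma$. Letting $\gamma\uparrow\gamma_0$, $\overline{R}_+$ is analytic on $\{\Re z>(1+\eps)\underline{\Lambda}_u\}$. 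Analyticity follows from the standard fact that a locally uniformly convergent Laplace integral is holomorphic, by Morera's theorem and Fubini, as in \cite[Theorem 5a, p.~57]{MR5923}.

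\emph{Left tail.} On $(-\infty,0]$ I will not estimate $R$ directly, because $R\to 1-f'(0)\neq0$ there. Instead I rewrite
\[
R=f(u)-f'(0)u=\big[f(u)-1-f'(1)(u-1)\big]+(f'(1)-f'(0))(u-1)+(1-f'(0)).
\]
The first bracket is $O((1-u)^{1+\eps})$ by the second relation in \eqref{eq:holder}, hence $O(e^{(1+\eps)\gamma x})$ for every $\gamma<\gamma_1$. Its half-line transform is therefore analytic for $\Re z<(1+\eps)\overline{\Lambda}_u$. The remaining two terms are affine in $u$. Their half-line transforms on $(-\infty,0]$ are exactly of the same nature as the corresponding pieces of $\overline{u}$, namely the contribution of $u-1$ and of the limiting constant. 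So, in whatever sense the left-hand part of $\overline{u}$ converges up to $\overline{\Lambda}_u$, these two terms converge and are analytic in the same region, hence a fortiori below $(1+\eps)\overline{\Lambda}_u$ once the bracket is handled. Adding the right and left pieces gives analyticity of $\overline{R}$ in $\{(1+\eps)\underline{\Lambda}_u<\Re z<(1+\eps)\overline{\Lambda}_u\}$.

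\emph{Main obstacle.} The delicate point is the left tail, where $R$ does not decay. The decomposition above is designed so that the only non-decaying parts are affine in $u$, and these are inherited verbatim from the structure of $\overline{u}$; the genuinely nonlinear remainder gains the factor $1+\eps$. I expect the bookkeeping to require care in matching exactly the convention the paper uses for the convergence region of $\overline{u}$ near the constant limit $1$. If that convention causes trouble, I would first test the alternative of subtracting the limit $1-f'(0)$ times the indicator of $(-\infty,0]$, whose transform $-(1-f'(0))/\lambda$ is explicit, and check the claimed range directly.
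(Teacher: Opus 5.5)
The left-tail half of your argument fails as set up. You take $\overline\Lambda_u$ to be the decay rate $\gamma_1>0$ of $1-u$ at $-\infty$ and try to reach $\Re z<(1+\eps)\gamma_1$, but that claim is false. As $x\to-\infty$, $R(x)\to 1-f'(0)\neq 0$, while $R-(1-f'(0))=O(1-u)$ decays exponentially there. Hence $\overline R(\lambda)=-(1-f'(0))/\lambda+g(\lambda)$ with $g$ analytic near $0$, so $\overline R$ has a genuine pole at $0$. Your ``a fortiori'' also runs backwards: analyticity for $\Re z<\overline\Lambda_u$ does not give it on the larger half-plane $\Re z<(1+\eps)\overline\Lambda_u$ when $\overline\Lambda_u>0$.

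The way out is to take the definition of $\overline u$ literally. Since $u(-\infty)=1$, $\int_\R e^{-\lambda x}u(x)\,\dd x$ diverges for every real $\lambda\ge 0$. So $\overline\Lambda_u=0$, despite the remark $0<\overline\Lambda_u$ just before Proposition~\ref{prop:j}, and the strip is $\{(1+\eps)\underline\Lambda_u<\Re z<0\}$. That is exactly what the paper proves: it fixes $\lambda\in((1+\eps)\underline\Lambda_u,0)$ and disposes of $(-\infty,L]$ in one line, because $R$ is bounded and $e^{-\lambda x}$ is integrable there. Your bracket $f(u)-1-f'(1)(u-1)$ is the right object for the symmetric statement about $1-u$ (transform of $1-u$, remainder $1-f(u)-f'(1)(1-u)$), not for this proposition.

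Your right-tail estimate is the paper's argument, but the key pointwise bound is misattributed. The bound $u(x)=O(e^{-\gamma x})$ for all $\gamma<-\underline\Lambda_u$ does not follow from Proposition~\ref{prop:expo_tails}, which only gives $\gamma<\lambda_0$. The equality $\lambda_0=-\underline\Lambda_u$ is established in Theorem~\ref{teo:asymptotic} using the present proposition, so invoking it here would be circular. With $\gamma<\lambda_0$ alone you only get analyticity for $\Re z>-(1+\eps)\lambda_0$, which need not reach past $\underline\Lambda_u$ --- and reaching past $\underline\Lambda_u$ is the whole point of the proposition. The missing step is the paper's monotonicity argument. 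For $\sigma\in(\underline\Lambda_u,0)$,
\[
u(x)\,\frac{e^{-\sigma x}}{|\sigma|}=u(x)\int_{-\infty}^x e^{-\sigma y}\,\dd y\le \int_{-\infty}^x e^{-\sigma y}u(y)\,\dd y\le \overline u(\sigma),
\]
so $u(x)\le|\sigma|\,\overline u(\sigma)\,e^{\sigma x}$ for all $x$. Together with $|R|\le Cu^{1+\eps}$ for $x\ge L$, this gives convergence of $\int_L^{+\infty}e^{-\lambda x}R(x)\,\dd x$ for $\Re\lambda>(1+\eps)\sigma$. Letting $\sigma\downarrow\underline\Lambda_u$ concludes.
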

	\begin{proof}
		Let $\lambda\in((1+\varepsilon)\underline{\Lambda}_u, 0)$. From Assumption \ref{as-2}-(i), there is a sufficiently large $L$ such that $|R(x)| \leq C u(x)^{1+\eps}$ for all $x\geq L$ and some positive constant $C$. Also, since $u$ is nonincreasing, for every $\sigma\in(\underline{\Lambda}_u, 0)$ we have $u(x)\int_{-\infty}^x e^{-\sigma y} \dd y \leq \int_{-\infty}^x e^{-\sigma y}u(y) \dd t \leq \overline{u}(\sigma)$, and therefore $u(x) \leq C_\sigma e^{\sigma x}$ for every $x\in \R$. Now, observe that since $-\lambda$ is positive, it suffices to show that the integral defining $\overline{R}(\lambda)$ is finite over the interval $[L,+\infty)$. Based on what has been done before, it follows that
		\[
		\int_L^{+\infty} e^{-\lambda x} R(x) \dd x \leq C \int_L^{+\infty} e^{-\lambda x}e^{(1+\eps)\sigma x} \dd x = C \int_L^{+\infty} e^{-(\lambda -\sigma(1+\varepsilon))x} \dd x,
		\]
		and the last term is finite by choosing $\sigma$ sufficiently close to $\underline{\Lambda}_u$, this completes the proof.
	\end{proof}
	
	We now state the main result of this section.
	\begin{theorem}\label{teo:asymptotic}
		Let the Assumption \ref{as:kernel}  be satisfied, and let $u\in C(\R)$ be a nonincreasing solution of \eqref{eq:int} with $u(-\infty)=1$ and $u(+\infty)=0$. Then, there are positive constants $C_0$ and $C_1$ such that
		\begin{equation}\label{asym}
			u(x) \sim C_0 e^{-\lambda_0 x} \quad \text{as}~x\to+\infty \quad \text{and}\quad  1-u(x) \sim C_1 e^{\lambda_1 x} \quad \text{as}~ x\to - \infty,
		\end{equation}
		with $\lambda_0$ and $\lambda_1$ defined as in the beginning of the section.
	\end{theorem}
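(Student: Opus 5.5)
We follow the Diekmann--Kaper strategy and treat only the tail at $+\infty$; the tail at $-\infty$ is the same argument applied to $w=1-u$, which solves $w=J\ast g(w)$ with $g(s)=1-f(1-s)$, $g'(0)=f'(1)$, and Assumption \ref{as-2}-(i) at $x=1$.

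\textbf{Step 1: a Laplace-domain identity.} Write $f(u)=f'(0)u+R$, so that $u=f'(0)\,J\ast u+J\ast R$. For $\lambda$ in the common strip of convergence $(\underline{\Lambda}_u,\overline{\Lambda}_u)$, which by Proposition \ref{prop:j} lies inside the strip of $\overline{J}$, the convolution theorem gives
\[
H_0(\lambda)\,\overline{u}(\lambda)=\overline{J}(\lambda)\,\overline{R}(\lambda).
\]
Hence
\[
\overline{u}(\lambda)=\frac{\overline{J}(\lambda)\overline{R}(\lambda)}{H_0(\lambda)}.
\]
By Proposition \ref{prop:r} the right-hand side is meromorphic in the strip $(1+\eps)\underline{\Lambda}_u<\Re z<\overline{\Lambda}_u$. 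By Proposition \ref{prop:j} this strip also lies within the convergence strip of $\overline{J}$, at least near $\underline{\Lambda}_u$.

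\textbf{Step 2: $\underline{\Lambda}_u=-\lambda_0$.} The function $u$ is nonnegative. By Landau's theorem (Widder), the real point $\underline{\Lambda}_u$ is therefore a singularity of $\overline{u}$. The numerator $\overline{J}\,\overline{R}$ is analytic on a neighborhood of $\underline{\Lambda}_u$, since $(1+\eps)\underline{\Lambda}_u<\underline{\Lambda}_u$. So this singularity must be a zero of $H_0$.

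On the real axis, $H_0$ vanishes only at $-\lambda_0$ and at a positive point; $\underline{\Lambda}_u<0$ rules out the latter. Proposition \ref{prop:expo_tails} gives $\underline{\Lambda}_u\le-\lambda_0$, so $\underline{\Lambda}_u=-\lambda_0$.

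Near $-\lambda_0$ the zero is simple, so $\overline{u}$ has a simple pole with residue
\[
r=\frac{\overline{J}(-\lambda_0)\overline{R}(-\lambda_0)}{H_0'(-\lambda_0)}.
\]
We must check $r\neq0$. A pole is required, otherwise $\overline{u}$ would continue analytically past $\underline{\Lambda}_u$, contradicting Landau's theorem. So $\overline{R}(-\lambda_0)\neq0$ automatically.

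\textbf{Step 3: no other poles on the boundary line.} We need that $-\lambda_0$ is the only zero of $H_0$ on the line $\Re z=-\lambda_0$. For $z=-\lambda_0+i\tau$ with $\tau\neq0$,
\[
\bigl|\overline{J}(z)\bigr|<\overline{J}(-\lambda_0)=1/f'(0).
\]
The inequality is strict because $J>0$ a.e., so $e^{-i\tau x}$ is not a.e. constant. Hence $H_0(z)\neq0$. The same strict bound holds uniformly for $|\tau|\ge\delta$ on compact $\tau$-sets.

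\textbf{Step 4: Tauberian conclusion (main obstacle).} We apply a Tauberian theorem of Ikehara type, in the form Diekmann--Kaper use for two-sided transforms, to the function $e^{\lambda_0 x}u(x)$ on $[0,\infty)$. Its transform is
\[
\frac{C_0}{s}+\text{(function continuous up to }\Re s=0\text{)}.
\]
The negative half-line contributes an entire function, and the continuity comes from Step 3. Ikehara then yields, for the positive nonincreasing function $u$,
\[
\int_x^\infty u\sim\frac{C_0}{\lambda_0}e^{-\lambda_0x}.
\]
Monotonicity lets us differentiate this asymptotic: sandwich $u(x)$ between the averages $\frac1h\int_{x-h}^{x}u$ and $\frac1h\int_x^{x+h}u$, then let $h\to0$. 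This gives $u(x)\sim C_0e^{-\lambda_0x}$ with $C_0=-r>0$.

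The delicate point is verifying Ikehara's hypotheses. One needs local integrability or continuity of the boundary values along the whole line $\Re z=-\lambda_0$, away from the pole. Large $|\tau|$ is handled by the Riemann--Lebesgue lemma: $\overline{J}(-\lambda_0+i\tau)\to0$, so $H_0\to1$. Ikehara's theorem itself only needs local behavior, so this suffices.

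The $-\infty$ tail then gives $C_1>0$ in the same way, using $\lambda_1$ and $H_1$.
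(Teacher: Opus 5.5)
Your proposal is correct and follows essentially the same route as the paper: the identity $H_0\overline{u}=\overline{J}\,\overline{R}$, Landau's theorem forcing $\underline{\Lambda}_u=-\lambda_0$ to be a simple pole with $\overline{R}(-\lambda_0)\neq 0$, positivity of $J$ a.e.\ excluding other zeros of $H_0$ on $\Re z=-\lambda_0$, and an Ikehara-type theorem with the monotonicity of $u$ as the Tauberian condition. There are two harmless slips. First, $\int_{-\infty}^0 e^{-\lambda x}u(x)\,\dd x$ is only analytic in $\Re\lambda<0$, not entire, because $u(-\infty)=1$; this still suffices near the line $\Re\lambda=-\lambda_0$. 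Second, by your own expansion $C_0/s$ the constant is the residue itself, $C_0=r>0$, not $-r$; the formula displayed at the end of the paper's proof has the same sign slip, since $\overline{J}'(-\lambda_0)<0$ and $\overline{R}(-\lambda_0)>0$.
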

	\begin{proof}
		We will only prove the first part. We start rewriting \eqref{eq:int} as $u = f'(0) J \ast u + J\ast R$, and taking the Laplace transform leads to
		\begin{equation}\label{eq:laplace}
			H_0(\lambda)\overline{u}(\lambda) = \overline{J}(\lambda)\overline{R}(\lambda).
		\end{equation}
		From equation \eqref{eq:laplace} and the fact that $\overline{u}$ is singular at $\underline{\Lambda}_u$ (see \cite[Theorem 5b, p. 58]{MR5923}), we infer that $H_0(\underline{\Lambda}_u)=0$, what jointly with the definition of $\lambda_0$ follows that $\underline{\Lambda}_u = -\lambda_0$. A similar analysis yields that $\overline{R}(-\lambda_0)\neq 0$ since this point is a simple zero of $H_0$. Observe also that $H_0$ has no more zeros on the line $\Re\, z= -\lambda_0$. In fact, if $\omega\in \R$ is such that $H_0(-\lambda_0 + i\omega)=0$, equating the real parts gives
		\[
		1-f'(0)\int_\R e^{-\lambda_0 x} J(x)\cos(\omega x) \dd x = 0 = 1- f'(0) \int_\R e^{-\lambda_0 x} J(x) \dd x,
		\]
		and this implies that $\omega=0$ since $J>0$ a.e.
		
		{Owing to the above observations, the conclusion follows as in the proof of Theorem 6.2 in \cite{MR512163}, mainly by an application of the Ikehara's Tauberian Theorem (see \cite[Proposition 2.3]{MR2052422}, as well as Chapter V, section 17 in \cite{MR5923}). Moreover, the constant $C_0$ in \eqref{asym} is given by the residue of $\overline{u}$ at $-\lambda_0$, that is, $C_0 = \frac{\overline{J}(-\lambda_0) \overline{R}(-\lambda_0)}{f'(0) \overline{J}'(-\lambda_0)}$.}
	\end{proof}
	
	\section{Proofs of Theorems \ref{teo:main_1} and \ref{teo:main_2}} \label{sec:proofs}
	
	Owing to Theorem \ref{teo:exis_Qc}, the function $c\mapsto v_c$ is well defined from $\R$ into $\R$. Thus, we seek for some particular value of $c$ at which $v_c=0$, {which we obtain by the intermediate value theorem.} 
	
	
	\begin{proof}[Proof of Theorem \ref{teo:main_1}]
		We have to show that for some particular value of $c=c^*$, the associated speed $v^*$ equals to zero, yielding a standing wave of the discrete-time recurrence equation defined by $Q_c$ and hence a traveling wave solution of the Gurtin-MacCamy model. First, from \eqref{mono_speeds_2} and \eqref{estim_bi_speed} we observe that for large values $c\gg 1$ the associated speed $v_c$ becomes negative; alternatively, for large negative values $c\ll -1$, the speed becomes positive. {Since the speed $v_c$ is unique for each $c\in\mathbb{R}$, the continuity of the map $c\mapsto v_c$ follows from classical arguments, as the limit of any convergent sequence is uniquely identified}. Thus, an application of the intermediate value theorem leads to the existence of some $c^*\in \R$ such that the associated wave profile $\phi^*$ is a fixed point of $Q_{c^*}$.
		
		As was discussed in Section \ref{sec:prelim}, such a fixed point can be used to construct a traveling wave solution of \eqref{GM-model} by means of the formula
		\[
		U(a,x) = \Pi(a) \int_\R K(a,x+c^* a -y) f(\phi^*(y)) \dd y.
		\]
		From the monotonicity of $\phi^*$ it is clear that $U$ is monotone in the space variable. Furthermore, every bistable traveling wave solution $(c,U)$ of \eqref{GM-model} provides a bistable wave $\phi$ to the equation \eqref{eq:Qc}, and then Theorem \ref{teo:exis_Qc} implies that it should be strictly decreasing.
	\end{proof}
	
	\begin{proof}[Proof of Theorem \ref{teo:main_2}]
		First we assume that the invasion condition $\int_0^1 (f(x) - x) \dd x >0$ is satisfied. Let $c\in \R$ and let $\phi$ be any nonincreasing solution of \eqref{eq:Qc}, we aim to prove that $c>0$. To accomplish this, define $F(x) = f(\phi(x))$ and observe that $F$ is a nonincreasing function. Since $\phi$ solves \eqref{eq:Qc}, Tonelli's Theorem gives
		\begin{equation}\label{eq:6}
			\int_\R \phi(x) F'(x) \dd x = \int_\R G_c(y) \int_\R F(x-y) F'(x) \dd x\, \dd y.
		\end{equation}
		Set $\FF(y) = \int_\R F(x-y) F'(x) \dd x$. The following properties of the function $\FF$ are established from direct integration and integration by parts
		\[
		\FF(0) = -\frac{1}{2} \quad \text{and} \quad \FF(y) + \FF(-y) = -1.
		\]
		In particular, the last one shows that the function $\mathscr{F}(y) = \FF(y) + \frac{1}{2}$ is odd. Also,  $F$ being nonincreasing implies that $\FF$ is nonincreasing as well, and therefore $\mathscr{F}(y) \leq 0$ for all nonnegative $y$. Then, from \eqref{eq:6} and the unit total mass of $G_c$ it follows that
		\[
		\int_\R \phi(x)F'(x) \dd x = -\frac{1}{2} + \int_\R G_c(y) \mathscr{F}(y) \dd y = -\frac{1}{2} + \int_0^{+\infty} (G_c(y) - G_c(-y)) \mathscr{F}(y) \dd y.
		\]
		
		On the other hand, a direct computation shows that $\int_\R\phi(x) F'(x) \dd x = -\frac{1}{2} + \int_0^1 (f(x) - x) \dd x$, what jointly with the above relation gives
		\[
		\int_0^{+\infty} (G_c(x) - G_c(-x)) \mathscr{F}(x) \dd x = \int_0^1 (f(x)-x) \dd x.
		\]
		To estimate the difference in the first integral note that
		\[
		G_c(y) - G_c(-y) = \int_0^{+\infty} \beta(a)\Pi(a)[ K(a,y+ca) - K(a,y-ca)] \dd a,
		\]
		what this implies that $c>0$. Indeed, if not, the monotonicity of $K$ on $\R_{-}$ and $\R_+$, as well as its symmetric properties, shows that $G_c(y) - G_c(-y)\geq 0$ for all $y\geq 0$. Then a contradiction is obtained because $\mathscr{F}$ is nonpositive.
		
		The statement (ii) follows from Theorem \ref{teo:asymptotic}, since any traveling wave $(c,U)$ defines a solution to \eqref{eq:Qc} by $\phi(x) = \int_0^{+\infty} \beta(a) U(a,x) \dd a$, and to translate the properties of $\phi$ to $U$ which we use the representation formula \eqref{formula_U}.
	\end{proof}

	\section*{Acknowledgments}

	\noindent H. K. is partially supported by the NSFC grant (12622130, 12301259 and 12371169). F. H. is supported by Agencia Nacional de Investigación y Desarrollo (ANID) through the grant Beca de Doctorado Nacional 2024-21240616 and by Région Normandie through the RIN50 project "DAMES". 

	\bibliographystyle{amsplain}
	\bibliography{biblio.bib}
	
\end{document}